\newtheorem{defi}{Definition}[section]
\newtheorem{thm}{Theorem}[section]
\newtheorem{lem}{Lemma}[section]
\newtheorem{rem}{Remark}[section]
\providecommand{\norm}[1]{\left\lVert#1\right\rVert}
\DeclareMathOperator*{\argmin}{arg\,min}
\def\namedlabel#1#2{\begingroup
    #2%
    \def\@currentlabel{#2}%
    \phantomsection\label{#1}\endgroup
}
\newcommand{\vertiii}[1]{{\left\vert\kern-0.25ex\left\vert\kern-0.25ex\left\vert #1
    \right\vert\kern-0.25ex\right\vert\kern-0.25ex\right\vert}}
\begin{document}
 \begin{frontmatter}
\title{Model-free Study of Ordinary Least Squares Linear Regression}
\runtitle{Assumption-lean Linear Regression}
\begin{aug}
  \author{\fnms{Arun K.}  \snm{Kuchibhotla}\ead[label=e1]{arunku@wharton.upenn.edu}},
  \author{\fnms{Lawrence D.} \snm{Brown}},
  \and
  \author{\fnms{Andreas} \snm{Buja}}

  \runauthor{Kuchibhotla et al.}

  \affiliation{University of Pennsylvania}

  \address{University of Pennsylvania\\ \printead{e1}}

\end{aug}
\begin{abstract}
Ordinary least squares (OLS) linear regression is one of the most basic statistical techniques for data analysis. In the main stream literature and the statistical education, the study of linear regression is typically restricted to the case where the covariates are fixed, errors are mean zero Gaussians with variance independent of the (fixed) covariates. Even though OLS has been studied under misspecification from as early as the 1960's, the implications have not yet caught up with the main stream literature and applied sciences. The present article is an attempt at a unified viewpoint that makes the various implications of misspecification stand out. 
\end{abstract}
\end{frontmatter}
\section{Introduction and Motivation}
The aim of this article is to provide what we call an ``upside down	analysis'' for linear regression.  While traditional linear regression	analysis starts with assumptions such as fixed covariates as well as	linearity and Gaussian errors, upside down analysis starts with a	given estimator -- OLS in this case -- and finds the most general	conditions under which the estimator ``works'' in the sense that it	has a well-defined target and permits inference.  In our upside down	analysis, essentially all we need is a form of law of large numbers
(LLN) and a central limit theorem (CLT) for second moments of the	response and the covariates.  Such LLNs and CLTs are satisfied in	numerous situations, including strong mixing random variables,
martingales, Markov chains, time series processes,~\ldots (see, e.g.,
chapters 3 and 5 of \cite{White2001}).  LLNs and CLTs can accommodate	non-identical distributions of random vectors, a fact that turns out	to be a particularly useful feature of the proposed analysis: It	allows a treatment of fixed and random covariates in a unified way by	thinking of fixed values of covariates as degenerate point mass	distributions.

It should be mentioned here that most of the results presented in this	article are known in the literature but are scattered.  A unified	treatment as given in this article appears to be non-existent.
Somewhat close in spirit but executing a traditional ``upside up''
analysis is by \cite{White1980} who studies linear regression under the assumption of independence	allowing for non-identical distributions.  Our analysis sidesteps his	assumption of absent correlation between covariates and errors.  \cite{Gallant1988} and \cite{White2001} extend the analysis of \cite{White1980} to certain dependence structures	but remain traditional in that they define targets of estimation in	terms of asymptotic limits for a fixed number of covariates, whereas	we define sample size-dependent targets and allow the number of	covariates to grow.

An essential difference of our ``upside down'' approach to these	traditional treatments is that the latter assume the existence of a	single target such that certain conditions are satisfied.  For	example, the traditional linear model assumes there exists a
$\mathbf{\beta}_0$ such that $Y_i = X_i^{\top} \beta_0 + \varepsilon_i$ and
$\varepsilon_i \sim N(0,\sigma^2)$ iid, or, as in \cite{White1980},
$\mathbb{E}[X_i \varepsilon_i] = 0$ for all $i=1,\ldots, n$.  In contrast,
we make no such assumptions; rather, we construct {\em sequences of
  targets} for the OLS procedure that are {\em intrinsic to OLS}
without postulating a single target that is extraneous to the	procedure.  This is crucially possible by postulating LLNs and CLTs	for the components of the normal equations (estimating equations).

The remainder of the article is organized as follows. Section \ref{sec:Target}
describes the concept of ``target of estimation'' and provides the	minimal assumptions under which the least squares estimator
``works''. Even though the definition of target can be done under very	minimal assumptions, it is hard to proceed further to inference. For this reason, we add an assumption of	independence of observations to proceed. In Section~\ref{sec:FixedX},
the problem is studied under the only assumption of fixed covariates	and none of the other classical assumptions as mentioned above. In	Section~\ref{sec:RandomX}, the problem is studied under the assumption	that the observations are independent and identically distributed	random vectors. After a preliminary understanding of the problem in	both fixed and random covariates, a unified framework is developed for	the problem in Section~\ref{sec:Unified} along with a normal	approximation. To do inference (or more specifically construct confidence	intervals), a ``good'' variance estimator is needed.  Section~\ref{sec:VarEst} provides theory about ``asymptotic'' variance	estimation and also bootstrap based variance estimation. Section~\ref{sec:tAndFTests} considers the problem of testing hypotheses about the	target of estimation. In Section~\ref{sec:DeteministicInequality}, we summarize the discussion of the unified framework by providing a deterministic inequality for the least squares linear regression estimator which reassures that only CLT and LLN are required for linear regression estimator to work. We end this article with some	concluding remarks in Section~\ref{sec:Conclude}. A ``non-technical''
discussion of (semiparametric) efficiency of estimators is included in appendix~\ref{app:Efficiency}.

In what follows the random variables and their realized values are	both denoted by capital letters such as $X$ and $Y$. For any vector
$v\in\mathbb{R}^q$, let $v(j)$ denote the $j$-th coordinate of $v$ for
$1\le j\le q$. For any real-valued function $f(\cdot)$,
$\argmin_x f(x)$ denotes the set of all (global) minimizers of
$f(\cdot)$ and the statement $$x^* := \argmin_x f(x),$$ should be	understood as stating $x^*$ is any element of the set of all	minimizers of $f(\cdot)$. Throughout this article, the symbol $C$ is	used to denote a universal constant that can be different in different	contexts. For matrices $A, B$, $A \preceq B$ is used mean that $B - A$ is a positive semi-definite matrix.

\section{Target of Estimation}\label{sec:Target}

Suppose
$(X_i^{\top}, Y_i)^{\top}\in\mathbb{R}^p\times\mathbb{R}, 1\le i\le n$
are random vectors obtained from $n$ cases under study.  A linear	regression is performed on this data and assuming invertibility of the	matrix involved, the estimator of the ``slope'' $\hat{\beta}_n$ is	given by
\begin{equation} \label{eq:OLS-estimate}
\hat{\beta}_n = \left(\frac{1}{n}\sum_{i=1}^n X_iX_i^{\top}\right)^{-1}\left(\frac{1}{n}\sum_{i=1}^n X_iY_i\right).
\end{equation}
It is readily seen that $\hat{\beta}_n$ is a function of two averages:
one is a matrix average and the other is a vector average. For	notational convenience, let
\begin{align*}
\hat{\Sigma}_n &:= \frac{1}{n}\sum_{i=1}^n X_iX_i^{\top}\in\mathbb{R}^p\times\mathbb{R}^p;\quad\mbox{and}\quad
\hat{\Gamma}_n := \frac{1}{n}\sum_{i=1}^n X_iY_i\in\mathbb{R}^p.
\end{align*}
In the classical linear regression theory one includes the linearity	assumption $Y_i = X_i^{\top}\beta_0 + \varepsilon_i$ with
$\mathbb{E}\left[\varepsilon_i|X_i\right] = 0$.  Under this	assumption, it is easy to see that
\[
\mathbb{E}\left[\hat{\beta}_n|X_1, \ldots, X_n\right] = \beta_0\quad\Rightarrow\quad \mathbb{E}\left[\hat{\beta}_n\right] = \beta_0.
\]
Observe that independence of the observations is not required in this	calculation.  Since $\hat{\beta}_n$ is unbiased for $\beta_0$, the	estimator $\hat{\beta}_n$ can be thought of as estimating
$\beta_0$. The main question of this article is ``what is $\hat{\beta}_n$ estimating if the linearity assumption is not true?''.

As mentioned $\hat{\beta}_n$ is a function of two averages
$\hat{\Sigma}_n$ and $\hat{\Gamma}_n$. If there exist a (non-random)
matrix $\Sigma_n$ and a (non-random) vector $\Gamma_n$ such that as
$n\to\infty$
\begin{equation}\label{eq:ConsistencyAssump}
\hat{\Sigma}_n - \Sigma_n = o_p(1)\quad\mbox{and}\quad \hat{\Gamma}_n - \Gamma_n = o_p(1),
\end{equation}
then it is not unreasonable to expect that $\hat{\beta}_n$ is getting	close to
\[
\beta_n := \Sigma_n^{-1}\Gamma_n\quad\mbox{(assuming invertibility of }\Sigma_n),
\]
in the sense that $\hat{\beta}_n - \beta_n = o_p(1)$.  Indeed, this	can be easily formalized by Slutsky's theorem.  There are many cases where	assumption \eqref{eq:ConsistencyAssump} holds true and some of these	are listed below.

\begin{itemize}

\item If, for all $1\le i\neq j\le n$ and $1 \le l,m \le p$, the random vectors satisfy 
\begin{align}
\mbox{Var}(X_i(l)X_i(m)) < \infty\quad&\mbox{and}\quad\mbox{Var}(X_i(l)Y_i) < \infty;\nonumber\\
\mbox{Cov}\left(X_i(l)X_i(m), X_j(l)X_j(m)\right) \le 0\quad&\mbox{and}\quad \mbox{Cov}\left(X_i(l)Y_i, X_j(l)Y_j\right) \le 0,\label{eq:NegativeAssoc}
\end{align}
and $p$ is fixed (not changing with $n$) then the random vectors	satisfy assumption \eqref{eq:ConsistencyAssump} with
\[
\Sigma_n := \frac{1}{n}\sum_{i=1}^n \mathbb{E}\left[X_iX_i^{\top}\right]\quad\mbox{and}\quad \Gamma_n := \frac{1}{n}\sum_{i=1}^n \mathbb{E}\left[X_iY_i\right].
\] 
[A special case in \eqref{eq:NegativeAssoc} is when the	observations are independent of each other. See \cite{Shao00} for a	generalization of this condition.]
The proof follows by proving that $\hat{\Sigma}_n - \Sigma_n$ and
$\hat{\Gamma}_n - \Gamma_n$ converge coordinate-wise in probability to	zero.  Since $p$ is fixed and does not change with $n$, it follows	that they also converge to zero in any norm.  The coordinate-wise	convergence in probability can be shown by directly calculating the	variance and proving that it converges to zero.  See Theorem 2.2.1 of
\cite{Durrett10}.  Assumption \eqref{eq:NegativeAssoc} essentially	declares that the observations are ``negatively associated''.

\item If the random vectors $(X_i^{\top}, Y_i)^{\top}$ are independent
  of each other and satisfy
\[
\max_{1\le i\le n} \mathbb{E}\left[\left|X_i(l)X_i(m)\right|^{1 + \delta}\right] < \infty\quad\mbox{for all}\quad 1\le l,m \le p,
\]
with $p$ fixed, then the random vectors satisfy assumption
\eqref{eq:ConsistencyAssump} with
$\Sigma_n = \mathbb{E}[\hat{\Sigma}_n]$ and
$\Gamma_n = \mathbb{E}[\hat{\Gamma}_n]$.  The proof follows by using	Theorem 3.7 and Corollary 3.9 of \cite{White2001}.  The point of this	example is that, under the independence assumption, boundedness of the	fourth moment of $X_i(l)$ is not required; it can be reduced to
$(1 + \delta)$-th moment of $X_i^2(l)$.
\end{itemize} 
Based on these calculations, we define the target of	estimation as follows.

\begin{defi} \label{def:consistency}
If the random vectors are distributed in such a way that $\hat{\beta}_n$ satisfies
\[
\|\hat{\beta}_n - \beta_n\|_2 = o_p(1)\quad\mbox{as}\quad n\to\infty,
\]
for some vector $\beta_n$, then we say $\hat{\beta}_n$ is estimating $\beta_n$ and the vector $\beta_n$ is called the \textbf{target of estimation}.
\end{defi}
\begin{rem}\emph{
  In classical mathematical statistics, one has a target of inference
  (or a parameter of interest) in mind and the goal is to estimate
  that parameter.  In contrast, we start here with the estimator and
  analyze what it is estimating -- which is then assigned as target of
  estimation.  This process is what we call an ``upside down
  analysis''.  This approach is also similar in spirit to the thinking
  in machine learning where the method of computation is introduced
  first rather than a model.  A treatment similar to the one above can
  be found in Chapter 3 of~\cite{Potscher97}.
}\end{rem}
\begin{rem}\emph{
  The target of estimation $\beta_n$ is allowed to depend on $n, p$
  and so can change when $n$ (or $p$) is increased.  Because of this
  feature, $\beta_n$ might sometimes be referred to as a ``moving
  target''.  Just from the definition above, $\beta_n$ is not unique
  in that one can always add a small constant (converging to zero) and
  that vector can still be called the target of estimation.  In all
  the cases to be dealt with, the choice of
  the target of estimation will be clear and taking any of the
  equivalent ones does not change the story.  Also, it is not required
  that $\{\beta_n\}$ as a sequence of non-random vectors converges to
  some (non-random) vector.  
}\end{rem}

\begin{rem}\emph{
  The choice of the Euclidean norm in the Definition~\eqref{def:consistency} is only for
  concreteness and can replaced by any other norm depending on the
  context.  The choice of norm only matters in so far as consistency
  in the sense of Definition \ref{def:consistency} can be proven for
  some norms and not for others.  This may be an issue when one allows
  $p$ to grow at certain rates as a function of~$n$.
}\end{rem}

The example settings and the calculations above have shown that the	target of estimation is well-defined for linear regression in many	cases.  There is, however, nothing special about linear regression and	the target of estimation can be easily derived for a large class of	estimators (possibly inspired by a very different distributional model	for the response).  Note that the least squares estimator can also be	defined as
\[
\hat{\beta} = \argmin_{\theta\in\mathbb{R}^p}\,\frac{1}{n}\sum_{i=1}^n (Y_i - X_i^{\top}\theta)^2.
\]
The target of estimation in our example setting can be written as
\[
\beta_n = \argmin_{\theta\in\mathbb{R}^p}\,\frac{1}{n}\sum_{i=1}^n \mathbb{E}\left[(Y_i - X_i^{\top}\theta)^2\right].
\]
What is noteworthy in this representation is that the empirical	objective function (based on the observations) got replaced by its	expected value (or more generally the limit of the empirical	objective). This is a pattern that holds in general problems. To	elaborate, suppose $Z_i\in\mathbb{R}^q, 1\le i\le n$ are random	vectors obtained from~$n$ cases under study and the estimator
$\hat{\theta}_n$ obtained by solving the minimization problem
\begin{equation}\label{eq:MEstimator}
\hat{\theta}_n := \argmin_{\theta\in\mathbb{R}^k}\,\frac{1}{n}\sum_{i=1}^n \rho(Z_i, \theta),
\end{equation}
is considered for some (loss) function $\rho:\mathbb{R}^q\times\mathbb{R}^k\to\mathbb{R}$. Then under mild conditions it can be proved that the target of estimation for $\hat{\theta}_n$ is $\theta_n$ given by the minimization problem
\begin{equation}\label{eq:MTarget}
\theta_n := \argmin_{\theta\in\mathbb{R}^k}\,\frac{1}{n}\sum_{i=1}^n \mathbb{E}\left[\rho(Z_i, \theta)\right].
\end{equation}
This kind of optimization is called an $M$-estimation problem. We refer to~\cite{Yuan98} and~\cite{VdvW96} for details on $M$-estimators.
For the rest of this article, we continue with linear regression since it has a simple	estimator that is known in closed form and hence many properties are	easier to analyze.  It should, however, be understood that most of the	techniques here do generalize to arbitrary $M$-estimation problems.

Even though the target of estimation can be derived under minimal assumptions, it is not possible to derive asymptotic normality or in particular the asymptotic variance without introducing specific dependence structure on the observations. In Section~\ref{sec:DeteministicInequality}, we present a deterministic inequality that can be used to prove asymptotic normality in general contexts and in the remaining part of the paper for simplicity, we focus only on the case of independent observations. In the two sections to follow the problem of linear regression is considered under two settings:
\begin{enumerate} \itemsep 0em
\item independent random vectors with fixed covariates; and
\item independent and identically distributed random vectors.
\end{enumerate}
We provide only a preliminary analysis, and a more complete study is	considered in the unified framework of Section~\ref{sec:Unified} which	includes both these settings as special cases.  One of the main	ingredients in this analysis is the multidimensional Berry-Esseen	bound from~\cite{Bent04}.

\begin{thm}[Berry-Esseen Bound; Theorem 1.1 of
  \cite{Bent04}]\label{thm:MultiBEBoundBentkus}
  Suppose $W_1, \ldots, W_n$ are independent mean zero random vectors
  in $\mathbb{R}^d$.  Then there exists a universal constant $C > 0$
  such that
\[
\sup_{A\in\mathcal{C}_d}\left|\mathbb{P}\left(\frac{1}{\sqrt{n}}\sum_{i=1}^n W_i \in A\right) - \mathbb{P}\left(N\left(0, \Upsilon_n\right)\in A\right)\right| \le C\frac{d^{1/4}}{n^{1/2}}\left(\frac{1}{n}\sum_{i=1}^n \mathbb{E}\left[\norm{\Upsilon_n^{-1/2}W_k}_2^3\right]\right),
\]
where $\mathcal{C}_d$ denotes the set of all convex subsets of $\mathbb{R}^d$ and
$
\Upsilon_n := \sum_{i=1}^n \mbox{Var}\left(W_i\right)/n.$
\end{thm}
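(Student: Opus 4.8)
Since Theorem~\ref{thm:MultiBEBoundBentkus} is quoted from \cite{Bent04}, I only sketch the strategy I would follow to prove it. The plan is to combine a Gaussian smoothing reduction with a Lindeberg-type replacement, the dimensional factor $d^{1/4}$ entering through a sharp bound on the Gaussian measure of $\epsilon$-neighborhoods of convex boundaries.

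First I would standardize. Writing $V_i := \Upsilon_n^{-1/2}W_i$, the vectors $V_i$ are independent, mean zero, and satisfy $\frac{1}{n}\sum_{i=1}^n \mbox{Var}(V_i) = I_d$; set $S := n^{-1/2}\sum_{i=1}^n V_i$ and $\beta := \frac{1}{n}\sum_{i=1}^n \mathbb{E}\norm{V_i}_2^3$. Because $x\mapsto\Upsilon_n^{1/2}x$ maps convex sets to convex sets, it suffices to bound $\Delta := \sup_{A\in\mathcal{C}_d}|\mathbb{P}(S\in A) - \mathbb{P}(Z\in A)|$ by $C d^{1/4} n^{-1/2}\beta$, where $Z\sim N(0,I_d)$. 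This removes $\Upsilon_n$ from the geometry and isolates the third-moment quantity appearing on the right-hand side.

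Next I would pass from convex sets to smooth test functions. Convolving $\mathbf{1}_A$ with a Gaussian kernel of bandwidth $\epsilon$ yields a smooth $g_{A,\epsilon}$ sandwiched between $\mathbf{1}_A$ and $\mathbf{1}_{A^\epsilon}$, so that $\Delta$ is bounded by the smoothed discrepancy $\sup_A|\mathbb{E}[g_{A,\epsilon}(S)] - \mathbb{E}[g_{A,\epsilon}(Z)]|$ plus a smoothing error of order $\sup_A \gamma_d((\partial A)^\epsilon)$, where $\gamma_d$ is the law of $Z$. The crucial geometric input is the uniform bound $\sup_{A\in\mathcal{C}_d}\gamma_d((\partial A)^\epsilon) \le C d^{1/4}\epsilon$ (Nazarov); this single inequality is the origin of the $d^{1/4}$ factor, and the matching lower bound of Ball shows the estimate is sharp for convex sets.

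Finally I would estimate the smoothed discrepancy by a Lindeberg telescope: introducing independent Gaussians $G_i\sim N(0,\mbox{Var}(V_i))$ and replacing the summands of $S$ one at a time, a third-order Taylor expansion of $g_{A,\epsilon}$ leaves, once the matched first and second moments of $V_i$ and $G_i$ cancel, a remainder governed by the third derivatives of $g_{A,\epsilon}$ paired with the moments $\mathbb{E}\norm{V_i}_2^3$. Here lies the main obstacle: bounding $D^3 g_{A,\epsilon}$ crudely by its sup norm, of order $\epsilon^{-3}$, and optimizing over $\epsilon$ recovers only the classical rate, a fractional power of $\beta$ rather than the sharp linear dependence claimed. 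To reach the stated bound one must instead integrate the high-order derivatives against the nearly Gaussian law of the partial sums, exploiting that $D^3 g_{A,\epsilon}$ concentrates in the $\epsilon$-shell around $\partial A$ whose Gaussian mass is again controlled by $d^{1/4}\epsilon$; carrying this out with a Bergstr\"om-type expansion that treats the leading correction term exactly is the delicate part, after which balancing the smoothing error against the replacement error produces the rate $C d^{1/4} n^{-1/2}\beta$.
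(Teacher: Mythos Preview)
The paper does not prove Theorem~\ref{thm:MultiBEBoundBentkus}; it is stated as a quotation of Theorem~1.1 of \cite{Bent04} and used as a black box throughout. You correctly recognize this and offer a sketch rather than a claimed proof, so there is no discrepancy to report at the level of comparison with the paper.

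As a brief comment on the sketch itself: your outline is faithful to the actual argument in \cite{Bent04}. The reduction to identity covariance, the Gaussian smoothing of indicators of convex sets, and the identification of the $d^{1/4}$ factor with the uniform bound on the Gaussian perimeter of convex sets are all the correct structural ingredients. You are also right to flag that the naive Lindeberg swap with a sup-norm bound on $D^3 g_{A,\epsilon}$ gives only a suboptimal rate, and that the sharp linear dependence on the third-moment sum requires the more refined induction/expansion that integrates the third derivative against the approximately Gaussian law of the remaining partial sum. One minor attribution point: the upper bound $\gamma_d((\partial A)^{\epsilon}) \le C d^{1/4}\epsilon$ for convex $A$ is due to Ball, while Nazarov established the matching lower bound showing sharpness; your sketch reverses the roles, but this does not affect the mathematics.
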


\section{Linear Regression with Fixed Covariates}\label{sec:FixedX}

In this section, we consider the problem of linear regression under	the assumption that the covariates are fixed (non-random) constants.
As mentioned before, this is one of the classical assumptions related	to linear and generalized linear models. For simplicity, let the	observations be denoted by
$(x_i^{\top}, Y_i)^{\top}\in\mathbb{R}^p\times\mathbb{R},\, 1\le i\le n$.
The covariates are written in lower case to emphasize that they are	fixed. And we assume the $Y_i$'s are independent random variables. The	least squares estimator is given by
\[
\hat{\beta}_n := \argmin_{\theta\in\mathbb{R}^p} \frac{1}{n}\sum_{i=1}^n \left\{Y_i - x_i^{\top}\theta\right\}^2 = \left(\frac{1}{n}\sum_{i=1}^n x_ix_i^{\top}\right)^{-1}\left(\frac{1}{n}\sum_{i=1}^n x_iY_i\right).
\]
The target of estimation is then given by
\begin{equation} \label{eq:pop-normal-eq}
\beta_n := \argmin_{\theta\in\mathbb{R}^p}\frac{1}{n}\sum_{i=1}^n \mathbb{E}\left[(Y_i - x_i^{\top}\theta)^2\right] = \left(\frac{1}{n}\sum_{i=1}^n x_ix_i^{\top}\right)^{-1}\left(\frac{1}{n}\sum_{i=1}^n x_i\mathbb{E}\left[Y_i\right]\right),
\end{equation}
where the expectation is taken with respect to the measure of
$Y_i$.  For simplicity let
\begin{equation} \label{eq:def-mu-sig}
\mu_i := \mathbb{E}\left[Y_i\right]\quad\mbox{and}\quad\Sigma_n := \frac{1}{n}\sum_{i=1}^n x_ix_i^{\top}.
\end{equation}
Note that because of fixed covariates we have
$\hat{\Sigma}_n = \Sigma_n$ for all $n$.  It follows that
\begin{align}
\sqrt{n}(\hat{\beta}_n - \beta_n) &= \frac{1}{\sqrt{n}}\sum_{i=1}^n \Sigma_n^{-1}x_i\left[Y_i - \mu_i\right]\label{eq:firsteq}\\ 
&= \frac{1}{\sqrt{n}}\sum_{i=1}^n \Sigma_n^{-1}x_i\left[Y_i - x_i^{\top}\beta_n\right].\label{eq:seceq}
\end{align}
The first equation \eqref{eq:firsteq} is specific to the fixed	covariate setting, while the second equation \eqref{eq:seceq} is valid	irrespective of whether $x_i$'s are fixed or random.  It is also	important to note that the summands in \eqref{eq:firsteq} are mean	zero while the ones in \eqref{eq:seceq} are not.  This follows from	the population normal equations obtained by differentiating the	objective function \eqref{eq:pop-normal-eq} defining the	target~$\beta_n$:
\[
\frac{1}{n}\sum_{i=1}^n \mathbb{E}\left[x_i\left(Y_i - x_i^{\top}\beta_n\right)\right] = 0.
\]
(Without any further assumptions, there is no reason for the	individual summand expectations to be zero.)

Since the covariates are fixed, it is clear that
$\sqrt{n}(\hat{\beta}_n - \beta_n)$ is a scaled average of	independent random vectors and the expectation of the average is	zero. Therefore, by the multidimensional Berry-Esseen bound (Theorem
\ref{thm:MultiBEBoundBentkus}), we obtain
\[
\sup_{A\in\mathcal{C}_p}\left|\mathbb{P}\left(\sqrt{n}(\hat{\beta}_n - \beta_n)\in A\right) - \mathbb{P}\left(Z\in A\right)\right| \le C \frac{p^{1/4}}{n^{1/2}} \gamma_n,
\]
where $\mathcal{C}_p$ is the set of all convex subsets of
$\mathbb{R}^p$, $Z$ is a Gaussian random vector with mean zero, and	the variance $\Psi_n$ given by
\begin{equation}\label{eq:AsymVar}
\Psi_n := n\mbox{Var}(\hat{\beta}_n - \beta_n) = \Sigma_n^{-1}K_n\Sigma_n^{-1},\quad K_n := \frac{1}{n}\sum_{i=1}^n x_ix_i^{\top}\mbox{Var}(Y_i).
\end{equation}
Here, $\gamma_n$ is defined as
\[
\gamma_n = \frac{1}{n} \sum_{i=1}^n \mathbb{E}\left[|Y_i - \mu_i|^3\right]\norm{\Psi_n^{-1}\Sigma_n^{-1}x_i}_2^3 
= \frac{1}{n}\sum_{i=1}^n \mathbb{E}\left[|Y_i - \mu_i|^3\right]\norm{\Sigma_nK_n^{-1}x_i}_2^3.
\]
Under certain (rate) assumptions on $p$ (that guarantee $p^{1/4}\gamma_n = o(n^{1/2})$), this implies that
\[
\sqrt{n}(\hat{\beta}_n - \beta_n)\overset{\mathcal{L}}{\approx} N\left(0, \Sigma_n^{-1}K_n\Sigma_n^{-1}\right).
\] 
We used the notation $\overset{\mathcal{L}}{\approx}$ to denote	approximation in law (or distribution).  To summarize, all we need to	assume for this asymptotic convergence result is the finiteness of the	third central moment of $Y_i$ and non-singularity of some matrices.
By comparison, classical linear regression analysis based on fixed	covariates and homoscedastic Gaussian errors requires the assumption	of linearity of the mean response in order to be valid.  In	particular, $\Sigma_n^{-1}/n$ defined in \eqref{eq:def-mu-sig} is {\em
  not} the variance of~$\hat{\beta}_n$. Since this {\em wrong} variance is reported in \verb|lm()| function of \verb|R|, one should be careful in interpreting the results.

In order to do inference using the estimator $\hat{\beta}_n$, one	should be able to estimate the asymptotic variance $\Psi_n$.  Note	that the $\Sigma_n$ factors of $\Psi_n$ are known and need not be	estimated.  All we need to estimate is $K_n$, the variance of $\sum x_iY_i/\sqrt{n}.$ 
In general, the summands $x_iY_i$ are non-identically distributed,
even if $Y_i$'s are identically distributed.  Since we do not know	their true expectations, it is {\em impossible} to estimate the	variance $K_n$.  (See Section~\ref{sec:VarEst} for details, and
\cite{Liu95} for a related problem.)  It {\em is}, however, possible	to construct a conservative estimator of $K_n$.  This construction	will be described in Section \ref{sec:Unified} (see \citet[page	492]{FAHR90}, and also \cite{Bac16} for an alternative proposal).
\begin{rem}\emph{
  The comment about impossibility of estimation of ``asymptotic''
  variance should be understood carefully.  The impossibility
  mentioned here is in the general context of fixed covariates with no
  more model assumptions than independence of observations.  In fact,
  if it is additionally assumed that $\mbox{Var}(Y_i) = \sigma^2(x_i)$
  for some continuous function $\sigma(\cdot)$, then the matrix $K_n$
  can be estimated consistently by non-parametrically estimating the
  function $\sigma(\cdot)$ (see, e.g., \cite{Alberto14}).
}\end{rem}

\section{Linear Regression with Random Covariates}\label{sec:RandomX}

Suppose we have $n$ subjects producing observations
$(X_i^{\top}, Y_i)^{\top}\in\mathbb{R}^p\times\mathbb{R},\, 1\le i\le	n$
and we apply linear regression on this data.  In this section, we	assume that these observations are random vectors that are not only	independent but also identically distributed.  Let
$(X^{\top}, Y)^{\top}$ be a generic random vector that is identically	distributed with the observations.  The least squares estimator is	still given by
\begin{equation} \label{eq:betahat-n-random-x}
\hat{\beta}_n := \argmin_{\theta\in\mathbb{R}^p} \frac{1}{n}\sum_{i=1}^n \left\{Y_i - X_i^{\top}\theta\right\}^2 = \left(\frac{1}{n}\sum_{i=1}^n X_iX_i^{\top}\right)^{-1}\left(\frac{1}{n}\sum_{i=1}^n X_iY_i\right).
\end{equation}
In this case, the target of estimation becomes
\begin{equation} \label{eq:beta-n-random-x}
\beta_n := \argmin_{\theta\in\mathbb{R}^p} \mathbb{E}\left[\left\{Y - X^{\top}\theta\right\}^2\right] = \left(\mathbb{E}\left[XX^{\top}\right]\right)^{-1}\left(\mathbb{E}\left[XY\right]\right).
\end{equation}
Note that the target $\beta_n$ does not overtly depend on $n$ because	of identical distribution of the random vectors.  We still index the	target by $n$ to have a consistent notation.  Furthermore, in theory	that follows the dimension $p$ of $\beta_n$ may be allowed to depend	on $n$, which introduces an indirect dependence of $\beta_n$ on~$n$.
For this reason all further population quantities will also be indexed	by~$n$. From definitions \eqref{eq:betahat-n-random-x} and
\eqref{eq:beta-n-random-x}, we have
\[
\sqrt{n} \, \hat{\Sigma}_n(\hat{\beta}_n - \beta_n) = \frac{1}{\sqrt{n}}\sum_{i=1}^n X_i(Y_i - X_i^{\top}\beta_n).
\]
In this case of iid random vectors it follows that the terms
$X_i(Y_i - X_i^{\top}\beta_n)$ are independent and identically	distributed random vectors with mean zero. Therefore, by the	multidimensional Berry-Esseen bound (Theorem
\ref{thm:MultiBEBoundBentkus}), it follows that
\[
\sup_{A\in\mathcal{C}_p}\left|\mathbb{P}\left(\frac{1}{\sqrt{n}}\sum_{i=1}^n X_i\left[Y_i - X_i^{\top}\beta_n\right]\in A\right) - \mathbb{P}\left(N(0, K_n)\in A\right)\right| \le C\frac{p^{1/4}}{n^{1/2}}\alpha_n,
\]
where
\[
K_n := \mathbb{E}\left[XX^{\top}(Y - X^{\top}\beta_n)^2\right]\quad\mbox{and}\quad \alpha_n := \mathbb{E}\left[\norm{K_n^{-1/2}X_i(Y_i - X_i^{\top}\beta_n)}_2^3\right].
\]
Therefore, under certain rate constraints on $p$ (that guarantees $p^{1/4}\alpha_n = o(n^{1/2})$),
$\sqrt{n}\,\hat{\Sigma}_n(\hat{\beta}_n - \beta_n)$ is approximately	normally distributed with mean zero and variance matrix $K_n$.  Since	the random vectors are assumed to be iid, under finite fourth moment	assumptions on the covariates, it follows that
\[
\norm{\hat{\Sigma}_n - \Sigma_n}_{op} = o_p(1),\quad\mbox{where}\quad \Sigma_n := \mathbb{E}\left[XX^{\top}\right].
\]
See \cite{Ver12} for more details related to the exact rate of this	convergence when $p/n = o(1)$. Also, see Section 4 of \cite{Uniform:Kuch18} for general results under exponential tail assumption on the observations.
 Thus, by Slutsky's theorem it follows	that
\[
\sqrt{n} (\hat{\beta}_n - \beta_n)
~\overset{\mathcal{L}}{\approx} ~
N\left(0, \Sigma_n^{-1}K_n\Sigma_n^{-1}\right),
\]
where we used the notation $\overset{\mathcal{L}}{\approx}$ for	approximation in law (or distribution) as in the previous section.
Again, for inference about $\beta_n$ using the estimator
$\hat{\beta}_n$, one needs to estimate $\Sigma_n$ and $K_n$.  The	matrix $\Sigma_n$ can be estimated readily by $\hat{\Sigma}_n$, but,
to estimate $K_n$, recall that one needs the variance of
\[
\frac{1}{\sqrt{n}}\sum_{i=1}^n X_i(Y_i - X_i^{\top}\beta_n).
\]
Because this is just a scaled average of $n$ independent identically	distributed random vectors with mean zero, $K_n$ can be consistently	estimated by
\[
\hat{K}_n := \frac{1}{n}\sum_{i=1}^n X_iX_i^{\top}(Y_i - X_i^{\top}\hat{\beta}_n)^2.
\]
To show that $\hat{K}_n$ is consistent for $K_n$, one can use the fact	that $\hat{\beta}_n$ is consistent for $\beta_n$ (see	Section~\ref{sec:VarEst} for more details). Thus, a consistent	estimator of the asymptotic variance of
$\sqrt{n}(\hat{\beta}_n - \beta_n)$ is given by
\begin{equation} \label{eq:sandwich-form}
\left(\frac{1}{n}\sum_{i=1}^n X_iX_i^{\top}\right)^{-1}\left(\frac{1}{n}\sum_{i=1}^n X_iX_i^{\top}(Y_i - X_i^{\top}\hat{\beta}_n)^2\right)\left(\frac{1}{n}\sum_{i=1}^n X_iX_i^{\top}\right)^{-1}.
\end{equation}
This is often referred to as the sandwich estimator of the asymptotic	variance (see Section \ref{sec:Unified} for more details).  It is	noteworthy that consistent estimation of the ``asymptotic'' variance	of $\hat{\beta}_n$ is possible under iid random vectors and is not	possible under fixed covariates without further assumptions.


\section{Unified Framework for Linear Regression}\label{sec:Unified} 

Before proceeding to unify both the settings of fixed and random	covariates, let us recall the main similarities and differences in the	analysis presented in the previous sections. First the similarities:
\begin{enumerate}
\item In the both cases, the least squares estimator $\hat{\beta}_n$
  has an ``asymptotic'' normal distribution with mean $\beta_n$, the
  ``moving target'' of estimation, and ``moving'' variance
  \[
  \frac{1}{n}\Sigma_n^{-1}K_n\Sigma_n^{-1},
  \quad\mbox{where}\quad 
  K_n := \frac{1}{n}\sum_{i=1}^n \mbox{Var}\left(X_i(Y_i - X_i^{\top}\beta_n)\right).
  \]
  Note that the target of estimation $\beta_n$ is different in 
  the fixed and random covariate cases.
\item The ``asymptotic'' normality result does not require any more
  assumptions than independence of observations and certain moment
  restrictions such as invertibility of the second moment matrix of
  covariates and finite fourth moments of covariates.  In particular,
  the classical assumptions of linearity and homoscedastic Gaussian
  errors are not required.
\end{enumerate}
Now the differences:
\begin{enumerate}
\item The score vectors $X_i(Y_i - X_i^{\top}\beta_n)$ are independent
  in both settings but are mean zero only in the random covariate
  setting.
\item The ``asymptotic'' variance can be consistently estimated only
  in the random covariate setting and is impossible to estimate in the
  fixed covariate setting without further assumptions.
\end{enumerate}
From this discussion it is clear that the similarities hold because of	the independence assumption and the differences arise from the	additional assumption of identical distributions.  The differences do	not derive from the stochastic properties of the covariates.  To	provide a unified analysis of linear regression that covers both	settings, we propose a framework where the random vectors
$(X_i^{\top}, Y_i)^{\top}$ are independent but are allowed to be	non-identically distributed.

Formally, the observations
$(X_i^{\top}, Y_i)^{\top}\in\mathbb{R}^{p+1}, 1\le i\le n$ are	independent with possibly non-identical distributions.  This framework	is much more general than either of the two settings -- fixed or	random covariates.  It allows for some random and some fixed	covariates as well.  The least squares linear regression estimator is	still given by
\begin{equation}\label{eq:LinRegEstimator}
\hat{\beta}_n = \argmin_{\theta\in\mathbb{R}^p}\,\frac{1}{n}\sum_{i=1}^n \left(Y_i - X_i^{\top}\theta\right)^2.
\end{equation}
The target of estimation in this framework can be defined as
\begin{equation}\label{eq:LinRegTarget}
\beta_n := \argmin_{\theta\in\mathbb{R}^p}\,\frac{1}{n}\sum_{i=1}^n\mathbb{E}\left[\left(Y_i - X_i^{\top}\theta\right)^2\right].
\end{equation}
Recall the following notations:
\begin{equation}\label{eq:SigmaGamma}
\begin{split}
\hat{\Sigma}_n := \frac{1}{n}\sum_{i=1}^n X_iX_i^{\top}\in\mathbb{R}^{p\times p},\quad&\mbox{and}\quad \hat{\Gamma}_n := \frac{1}{n}\sum_{i=1}^n X_iY_i\in\mathbb{R}^{p},\\
\Sigma_n := \frac{1}{n}\sum_{i=1}^n \mathbb{E}\left[X_iX_i^{\top}\right]\in\mathbb{R}^{p\times p},\quad&\mbox{and}\quad \Gamma_n := \frac{1}{n}\sum_{i=1}^n \mathbb{E}\left[X_iY_i\right]\in\mathbb{R}^{p}.
\end{split}
\end{equation}
Using these matrices and vectors, the estimator and the target defined	in \eqref{eq:LinRegEstimator} and \eqref{eq:LinRegTarget} can be	rewritten as
\begin{equation}\label{eq:ModifiedObjectives}
\begin{split}
\hat{\beta}_n &= \argmin_{\theta\in\mathbb{R}^p}\,\theta^{\top}\hat{\Sigma}_n\theta - \theta^{\top}\hat{\Gamma}_n,\qquad\mbox{and}\qquad
\beta_n = \argmin_{\theta\in\mathbb{R}^p}\,\theta^{\top}\Sigma_n\theta - \theta^{\top}\Gamma_n.
\end{split}
\end{equation}
Since these two objective functions are convex quadratic functions,
the minimizers can be obtained as zeros of the derivative, proving	that the estimator $\hat{\beta}_n$ satisfies
\begin{equation}\label{eq:LinRegEstimatorScore}
\hat{\Sigma}_n\hat{\beta}_n - \hat{\Gamma}_n = 0,
\end{equation}
and the target $\beta_n$ satisfies
\begin{equation}\label{eq:LinRegTargetScore}
\Sigma_n\beta_n - \Gamma_n = 0.
\end{equation}
Adding and subtracting $\beta_n$ from $\hat{\beta}_n$ in Equation
\eqref{eq:LinRegEstimatorScore} implies
\begin{equation}\label{eq:BeforeLinearRepresentation}
\hat{\Sigma}_n(\hat{\beta}_n - \beta_n) = \hat{\Gamma}_n - \hat{\Sigma}_n\beta_n,
\end{equation}
where the right hand side has zero expectation because of
\eqref{eq:LinRegTargetScore}.  Expanding the terms shows that
\[
\sqrt{n}\,\hat{\Sigma}_n(\hat{\beta}_n - \beta_n) = \frac{1}{\sqrt{n}}\sum_{i=1}^n X_i\left(Y_i - X_i^{\top}\beta_n\right) = \frac{1}{\sqrt{n}}\sum_{i=1}^n S_i, 
\]
where $S_i$ denotes the score given by
\begin{equation} \label{eq:score}
S_i ~:=~ X_i(Y_i - X_i^{\top}\beta_n) - \mathbb{E}\left[X_i(Y_i - X_i^{\top}\beta_n)\right].
\end{equation}
By the multivariate Berry-Esseen bound (Theorem
\ref{thm:MultiBEBoundBentkus}), it follows that
\begin{equation}\label{eq:BEBoundLinearRegUnified}
\sup_{A\in\mathcal{C}_p}\left|\mathbb{P}\left(\sqrt{n}\hat{\Sigma}_n(\hat{\beta}_n - \beta_n)\in A\right) - \mathbb{P}\left(N(0, K_n)\in A\right)\right| \le C\frac{p^{1/4}}{n^{1/2}}\alpha_n,
\end{equation}
where
\[
\alpha_n := \frac{1}{n}\sum_{i=1}^n \mathbb{E}\left[\norm{K_n^{-1/2}S_i}_2^3\right],\quad\mbox{and}\quad K_n = \mbox{Var}\left(\frac{1}{\sqrt{n}}\sum_{i=1}^n S_i\right) = \frac{1}{n}\sum_{i=1}^n \mathbb{E}\left[S_iS_i^{\top}\right].
\]
This Berry-Esseen bound proves that
\[
\sqrt{n}\,\hat{\Sigma}_n(\hat{\beta}_n - \beta_n) \overset{\mathcal{L}}{\approx} N\left(0, K_n\right).
\]
And since $\norm{\hat{\Sigma}_n - \Sigma_n}_{op} = o_p(1)$ as $n\to\infty$, it follows that
\[
\sqrt{n}(\hat{\beta}_n - \beta_n)\overset{\mathcal{L}}{\approx} N\left(0, \Sigma_n^{-1}K_n\Sigma_n^{-1}\right).
\]
Formally, we have proved the following theorem. 

\begin{thm}\label{thm:AsymNorm}
If $p$ is fixed (not depending on $n$), $\mathbb{E}\left[\norm{X_i}_2^6 + Y_i^6\right] \le B < \infty$ for all $i\ge 1$, and $K_n$ is invertible, then
\[
\sqrt{n}\,K_n^{-1/2}\hat{\Sigma}_n(\hat{\beta}_n - \beta_n) \overset{\mathcal{L}}{\to} N\left(0, I_p\right).
\]
Here $I_p$ denotes the identity matrix of dimension $p$.
\end{thm}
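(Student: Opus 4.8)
The plan is to recognize the statistic of interest as a whitened sum of independent, mean-zero random vectors and to route everything through the multivariate Berry--Esseen bound already recorded as Theorem~\ref{thm:MultiBEBoundBentkus}. First I would invoke the exact linear representation derived above,
\[
\sqrt{n}\,\hat{\Sigma}_n(\hat{\beta}_n - \beta_n) = \frac{1}{\sqrt{n}}\sum_{i=1}^n S_i,
\qquad S_i = X_i(Y_i - X_i^{\top}\beta_n) - \mathbb{E}\left[X_i(Y_i - X_i^{\top}\beta_n)\right],
\]
where, by construction, the $S_i$ are independent with $\mathbb{E}[S_i]=0$ and $K_n = n^{-1}\sum_i \mathbb{E}[S_iS_i^{\top}]$. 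Applying Theorem~\ref{thm:MultiBEBoundBentkus} with $W_i = S_i$ and $\Upsilon_n = K_n$ gives \eqref{eq:BEBoundLinearRegUnified}. To pass to the whitened statistic, I would use that $A \mapsto K_n^{1/2}A$ is a bijection of $\mathcal{C}_p$ onto itself, so the supremum over convex sets is unchanged and
\[
\sup_{A\in\mathcal{C}_p}\left|\mathbb{P}\left(\sqrt{n}\,K_n^{-1/2}\hat{\Sigma}_n(\hat{\beta}_n - \beta_n)\in A\right) - \mathbb{P}\left(N(0, I_p)\in A\right)\right| \le C\frac{p^{1/4}}{n^{1/2}}\alpha_n.
\]

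It then remains to show the right-hand side tends to zero. Since $p$ is fixed, $p^{1/4}$ is a constant and the claim reduces to $\alpha_n = o(n^{1/2})$; in fact I expect the stronger $\alpha_n = O(1)$. Writing $\norm{K_n^{-1/2}S_i}_2^3 \le \norm{K_n^{-1/2}}_{op}^3\,\norm{S_i}_2^3$ and noting, by the triangle and Jensen inequalities, that $\mathbb{E}\norm{S_i}_2^3 \le C\,\mathbb{E}\norm{X_i(Y_i - X_i^{\top}\beta_n)}_2^3$, I would bound $\norm{X_i(Y_i-X_i^{\top}\beta_n)}_2 \le \norm{X_i}_2|Y_i| + \norm{X_i}_2^2\norm{\beta_n}_2$ via Cauchy--Schwarz and apply $(a+b)^3 \le 4(a^3+b^3)$. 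This reduces the problem to the moments $\mathbb{E}[\norm{X_i}_2^3|Y_i|^3]$ and $\mathbb{E}[\norm{X_i}_2^6]$, both controlled by the hypothesis $\mathbb{E}[\norm{X_i}_2^6+Y_i^6]\le B$ (for instance $\mathbb{E}[\norm{X_i}_2^3|Y_i|^3]\le(\mathbb{E}\norm{X_i}_2^6)^{1/2}(\mathbb{E}Y_i^6)^{1/2}\le B$). This leaves only the two factors $\norm{K_n^{-1/2}}_{op}$ and $\norm{\beta_n}_2$ to be controlled uniformly in $n$.

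The main obstacle is exactly this uniform control, since the bare statement ``$K_n$ is invertible'' only furnishes invertibility for each fixed $n$, not a uniform bound. To make the argument rigorous I would read the invertibility hypothesis as the uniform lower bound $\liminf_n \lambda_{\min}(K_n) > 0$, which yields $\sup_n\norm{K_n^{-1/2}}_{op} < \infty$. The uniform boundedness of the target, $\sup_n\norm{\beta_n}_2 < \infty$, I would extract from $\beta_n = \Sigma_n^{-1}\Gamma_n$: the moment bound forces $\norm{\Gamma_n}_2 \le (\mathbb{E}\norm{X_i}_2^2)^{1/2}(\mathbb{E}Y_i^2)^{1/2} \le B$, so a uniform lower bound on $\lambda_{\min}(\Sigma_n)$ gives $\sup_n\norm{\beta_n}_2<\infty$. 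With these two facts in hand, $\alpha_n = O(1)$, the Berry--Esseen bound vanishes, and, since the boundary of every convex set is $N(0,I_p)$-null, the convex-set convergence upgrades to the asserted convergence in distribution. (For fixed $p$ one could equally invoke a multivariate Lyapunov CLT for triangular arrays, the array being $\{n^{-1/2}K_n^{-1/2}S_i\}$ whose covariances sum to $I_p$ by construction; the Lyapunov condition it requires is again precisely $\alpha_n/\sqrt{n}\to 0$.)
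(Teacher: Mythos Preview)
Your proposal is correct and follows essentially the same route as the paper: the exact score representation $\sqrt{n}\,\hat{\Sigma}_n(\hat{\beta}_n-\beta_n)=n^{-1/2}\sum_i S_i$ combined with the multivariate Berry--Esseen bound of Theorem~\ref{thm:MultiBEBoundBentkus} is exactly how the paper argues, the theorem being stated immediately after~\eqref{eq:BEBoundLinearRegUnified} with the words ``Formally, we have proved the following theorem.'' Your additional work verifying $\alpha_n=O(1)$ from the sixth-moment hypothesis, and your observation that the bare hypothesis ``$K_n$ is invertible'' must really be read as a uniform lower bound $\liminf_n\lambda_{\min}(K_n)>0$ (together with a companion bound on $\lambda_{\min}(\Sigma_n)$ to control $\norm{\beta_n}_2$), are details the paper leaves implicit; you are right that without such uniformity the Berry--Esseen remainder need not vanish.
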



This completes the ``asymptotic'' study of linear regression estimator
$\hat{\beta}_n$ in the unified framework.  We write ``asymptotic''
because the normal approximations are actually non-asymptotic.

\begin{rem}\emph{{ (designation of covariates and response)}
  It should be clear from the discussion throughout that singling out
  a response variable $Y_i$ is arbitrary in principle and
  context-dependent in practice.  It is up to the analyst to decide
  which variables should be treated as covariates/regressors and which
  is to be treated as the response. 
}\end{rem}


\section{Variance Estimation and Bootstrap in Unified Framework}\label{sec:VarEst}

\subsection{Sandwich Variance Estimation}

The ``moving asymptotic'' variance of $\sqrt{n}(\hat{\beta}_n -
\beta_n)$, as shown in Theorem \ref{thm:AsymNorm}, is given by
$\Sigma_n^{-1}K_n\Sigma_n^{-1}$.  The $\Sigma_n$-part can be readily	estimated by $\hat{\Sigma}_n$ and the only part still in need of	estimation is $K_n$.  Recall that
\begin{align*}
K_n &= \mbox{Var}\left(\frac{1}{\sqrt{n}}\sum_{i=1}^n X_i\left(Y_i - X_i^{\top}\beta_n\right)\right)\\
&= \frac{1}{n}\sum_{i=1}^n \mathbb{E}\left[X_iX_i^{\top}\left(Y_i - X_i^{\top}\beta_n\right)^2\right]\\ &\qquad- \frac{1}{n}\sum_{i=1}^n \left(\mathbb{E}\left[X_i\left(Y_i - X_i^{\top}\beta_n\right)\right]\right)\left(\mathbb{E}\left[X_i\left(Y_i - X_i^{\top}\beta_n\right)\right]\right)^{\top}.
\end{align*}
So, $K_n$ is the variance of a scaled average of non-identically	distributed independent random vectors.  We prove in Lemma
\ref{lem:VarConsis} that such a variance cannot be estimated	consistently without further assumptions.  Accepting this for the	moment, note that
\[
K_n \preceq K_n^*,\quad\mbox{where}\quad K_n^* := \frac{1}{n}\sum_{i=1}^n \mathbb{E}\left[X_iX_i^{\top}\left(Y_i - X_i^{\top}\beta_n\right)^2\right],
\]
and the matrix $K_n^*$ can be consistently estimated by
\[
\check{K}_n := \frac{1}{n}\sum_{i=1}^n X_iX_i^{\top}\left(Y_i - X_i^{\top}\hat{\beta}_n\right)^2.
\]
Hence a conservative estimator of $K_n$ does exist and one such is	given by $\check{K}_n$.  (The notation $\check{\cdot}$ is used instead	of $\hat{\cdot}$ to emphasize that this is a conservative estimator	and not a consistent one.)  This provides a conservative estimator of	the asymptotic variance as 
\begin{equation}\label{eq:SandwichEstimator}
\left(\frac{1}{n}\sum_{i=1}^n X_iX_i^{\top}\right)^{-1}\left(\frac{1}{n}\sum_{i=1}^n X_iX_i^{\top}(Y_i - X_i^{\top}\hat{\beta}_n)^2\right)\left(\frac{1}{n}\sum_{i=1}^n X_iX_i^{\top}\right)^{-1}.
\end{equation}
This is the same as the sandwich estimator \eqref{eq:sandwich-form}
introduced for linear regression with iid random vectors.  However, it	is important to realize that in the setting of iid random observations	this is a consistent estimator, whereas in the unified framework it is	only a conservative estimator.

In the following we prove consistency of $\check{K}_n$ for $K_n^*$. For this, define an intermediate (unattainable) estimator
\[
\bar{K}_n := \frac{1}{n}\sum_{i=1}^n X_iX_i^{\top}\left(Y_i - X_i^{\top}\beta_n\right)^2.
\]
This is an average of independent random matrices that is unbiased for $K_n^*$. Hence, under the assumptions of Theorem~\ref{thm:AsymNorm}, by the results of \cite{Ver12},
\[
\norm{\bar{K}_n - K_n^*}_{op} = o_p(1).
\]
It now suffices to show that $\check{K}_n - \bar{K}_n$ converges to zero in terms of the operator norm in probability. Observe that
\begin{equation}
\begin{split}
\check{K}_n - \bar{K}_n &= \frac{2}{n}\sum_{i=1}^n X_iX_i^{\top}\left[X_i^{\top}\hat{\beta}_n - X_i^{\top}\beta_n\right] + \frac{1}{n}\sum_{i=1}^n X_iX_i^{\top}\left[X_i^{\top}\hat{\beta}_n - X_i^{\top}\beta_n\right]^2.
\end{split}
\end{equation}
Taking operator norm on both sides, we get
\begin{align*}
&\norm{\check{K}_n - \bar{K}_n}_{op}\\ 
\quad&\le \frac{2}{n}\sum_{i=1}^n \norm{X_i}_2^2\left|Y_i - X_i^{\top}\beta_n\right|\left|X_i^{\top}(\hat{\beta}_n - \beta_n)\right|\\
\quad&\quad + \frac{1}{n}\sum_{i=1}^n \norm{X_i}_2^2 \left|X_i^{\top}(\hat{\beta}_n - \beta_n)\right|^2\\
\quad&\le \left(1 + 2\left(\frac{1}{n}\sum_{i=1}^n \norm{X_i}_2^2\left|Y_i - X_i^{\top}\beta_n\right|^2\right)^{1/2}\right)\left(\frac{1}{n}\sum_{i=1}^n \norm{X_i}_2^2 \left|X_i^{\top}(\hat{\beta}_n - \beta_n)\right|^2\right).
\end{align*}
Under the assumptions of Theorem~\ref{thm:AsymNorm}, the first term above is $O_p(1)$ and the second term is converging to zero. Therefore, $\check{K}_n - K_n^*$ converges in probability to zero in terms of the operator norm. 

\begin{rem}\emph{{ (Best Conservative Estimator)}
  We have exhibited one conservative estimator for the ``moving
  asymptotic'' variance of $\hat{\beta}_n$, but many other
  conservative estimators exist, an example being the (delete-one)
  jackknife; see \cite{Long00} for more details.  It
  would be interesting to study the question of what comes closest to
  the true ``asymptotic'' variance, but we do not know of an answer at
  present.  An interesting feature of the conservative estimator
  \eqref{eq:SandwichEstimator} is that it is consistent in the case of
  iid observations, but the jackknife estimator is known to be
  (asymptotically) conservative.
}\end{rem}

\medskip

The following lemma proves that there does not exist a consistent	estimator for the variance of an average of non-identically	distributed independent random vectors.  The lemma is stated for	real-valued random variables which implies the result for random	vectors by taking one-dimensional projections.  See Proposition 3.5 of \cite{Bac16} for a	related result.

\begin{lem}\label{lem:VarConsis}
  Suppose $W_1, \ldots, W_n$ are independent random variables with
  $\mathbb{E}\left[W_i\right] = \mu_i$ and
  $\mbox{Var}(W_i) = \sigma_i^2$.  Then there does not exist a
  consistent estimator for $\eta_n^2$, where
\[
\eta_n^2 := \frac{1}{n}\sum_{i=1}^n \sigma_i^2.
\]
\end{lem}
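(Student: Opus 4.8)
The plan is to prove impossibility by a non-identifiability (Le Cam two-point) argument: I will exhibit two data-generating mechanisms that produce \emph{exactly the same} joint law for the observable vector $(W_1, \ldots, W_n)$, yet assign two different, non-converging values to the target $\eta_n^2$. Since any estimator $T_n = T_n(W_1, \ldots, W_n)$ is a function of the data alone, it cannot simultaneously be consistent for both values, contradicting the assumed consistency. Because the lemma has already been reduced to the scalar case (one-dimensional projections), I work throughout with real-valued $W_i$.

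Concretely, fix any $\sigma^2 \in (0,1)$, say $\sigma^2 = 1/2$, and compare two mechanisms. In Mechanism I, $W_1, \ldots, W_n$ are iid $N(0,1)$; here $\mu_i = 0$ and $\sigma_i^2 = 1$, so $\eta_n^2 = 1$. In Mechanism II, I first draw latent means $\mu_1, \ldots, \mu_n$ iid from $N(0, 1 - \sigma^2)$ and then, conditionally on these, draw $W_i \sim N(\mu_i, \sigma^2)$ independently. The crucial algebraic fact is the Gaussian-convolution identity: after integrating out the $\mu_i$, each $W_i$ is again $N(0,1)$ and the $W_i$ remain independent, so the \emph{observable} law of $(W_1, \ldots, W_n)$ under Mechanism II is identical to that under Mechanism I. On the other hand, for any \emph{fixed} realization of the sequence $(\mu_i)_i$, Mechanism II is a legitimate configuration in the class of the lemma, with $\sigma_i^2 = \sigma^2$ for every $i$, whence $\eta_n^2 = \sigma^2 = 1/2$.

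The contradiction will then be extracted as follows. Suppose a consistent estimator $T_n$ exists, so that $T_n - \eta_n^2 \to 0$ in probability under every admissible configuration. Applied to Mechanism I this gives $\mathbb{P}_I(|T_n - 1| > \varepsilon) \to 0$. Applied to each realized configuration of Mechanism II it gives $\mathbb{P}(|T_n - 1/2| > \varepsilon \mid \mu) \to 0$ for almost every realization of $(\mu_i)_i$; integrating over the prior on $\mu$ and using bounded convergence yields $\mathbb{P}_{II}(|T_n - 1/2| > \varepsilon) \to 0$. But the observable laws coincide, $\mathbb{P}_{II} = \mathbb{P}_I$ on $(W_1, \ldots, W_n)$, so under one and the same sequence of laws we would have both $T_n \to 1$ and $T_n \to 1/2$ in probability, which is impossible since $1 \neq 1/2$.

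The main obstacle is the passage from \emph{conditional} consistency to a statement about the mixture law: I must argue that consistency, which is a pointwise (per-configuration) guarantee, can be combined across the prior draw of $(\mu_i)_i$. This is where care is needed — one verifies that almost every realization of the latent sequence is an admissible configuration, so that conditional consistency genuinely applies, and then upgrades the pointwise-in-$\mu$ convergence of the bounded quantities $\mathbb{P}(|T_n - 1/2| > \varepsilon \mid \mu)$ to convergence of their expectation by bounded convergence. Everything else — the Gaussian identity, the value of $\eta_n^2$ under each mechanism, and the final ``two distinct limits'' contradiction — is routine once this step is in place.
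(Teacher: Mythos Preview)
Your argument is correct, and the core mechanism---integrating per-configuration consistency over a distribution on the nuisance means via bounded convergence---is the same device the paper uses. The route, however, is different. You build a Gaussian mixture: $N(0,1)$ admits the decompositions $\mu_i\equiv 0,\ \sigma_i^2=1$ and (after drawing $\mu_i\sim N(0,1/2)$) $\sigma_i^2=1/2$, producing identical observable laws with different $\eta_n^2$. The paper instead exploits the degenerate boundary $\sigma_i^2\equiv 0$: consistency there forces $f_n(\mu_1,\ldots,\mu_n)\to 0$ for \emph{every} deterministic sequence $(\mu_i)$, and since any realized data vector $(w_1,\ldots,w_n)$ is itself such a sequence, conditioning on $W_i=w_i$ and applying bounded convergence gives $f_n\to 0$ in probability under \emph{every} law, not just a Gaussian one. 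The paper's version is thus a bit more elementary (no Gaussian convolution identity needed) and yields the sharper conclusion that any would-be consistent estimator is forced to estimate zero; your Le~Cam two-point construction is arguably more recognizable as a standard non-identifiability argument and sidesteps reliance on the degenerate $\sigma_i^2=0$ case.
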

\begin{proof}
  We need to prove that there does not exist a sequence of measurable
  functions $\{f_n(W_1, W_2, \ldots, W_n)\}$ such that as
  $n\to\infty$,
  \[
  f_n(W_1, W_2, \ldots, W_n) - \eta_n^2 \overset{P}{\to} 0,
  \]
  for arbitrary $\{(\mu_i, \sigma_i^2):\,1\le i\le n\}$.  Assuming
  that such a sequence exists, we obtain from consistency in the
  special case $\sigma_i^2 = 0$ for $i \ge 1$ that
  \begin{equation}\label{eq:Consistent}
    f_n(\mu_1, \mu_2, \ldots, \mu_n) \overset{P}{\to}0,\quad\mbox{as}\quad n\to\infty,
  \end{equation}
  for any fixed sequence $(\mu_i)_{i\ge 1}$.
  Now, fix $\varepsilon > 0$ and define the sequence of (measurable)
  sets
  \[
  A_n = \{|f_n(W_1, W_2, \ldots, W_n)| \le \varepsilon\}.
  \]
  Using \eqref{eq:Consistent}, we have that for any sequence
  $(w_i)_{i\ge 1}$ as $n\to\infty$
  \[
  \mathbb{P}\left(A_n\big|W_1 = w_1, \ldots, W_n = w_n\right) =
  \mathbbm{1}{\{|f_n(w_1, w_2, \ldots, w_n)| \le \varepsilon\}} \to 1.
  \]
  Thus by bounded convergence theorem, $\mathbb{P}(A_n) \to 1$ as $n\to\infty$. This implies that as
  $n\to\infty$,
  \[
  f_n(W_1, W_2, \ldots, W_n) \overset{P}{\to} 0,
  \]
  irrespective of what the true $\eta_n^2$ is. This contradicts the
  existence of a sequence consistent for $\eta_n^2.$
\end{proof}

\begin{rem}\emph{
The proof also implies that there is no other option than to	over-estimate the variance, if at all possible.
}\end{rem}

\subsection{From Sandwich to Bootstrap Estimators}

The sandwich estimator presented in \eqref{eq:SandwichEstimator} is a	direct or closed-form estimator of standard error (squared).  It would	be of interest to understand how various versions of bootstrap work	for the purpose of variance estimation or distributional	approximation.  In what follows we consider two different bootstrap	approaches in the unified framework.  These are different from the	residual bootstrap and the nonparametric pairs bootstrap considered in	the literature on linear regression.  See \cite{Freedman81} and
\cite{Buja14} for more details.  There are two reasons for this	different approach we take.  Firstly, the residual bootstrap isn't	applicable because it assumes linearity and iid errors.  Secondly, the	pairs or $x$-$y$ bootstrap can lead to singular linear systems in	simulations.  The bootstrap approaches provided here are applicable in	the unified framework and bypass the problem of singular linear	systems.  We call this bootstrap methodology the ``score bootstrap''
since it is based on resampling scores.  This idea was introduced and	studied under classical model assumptions in \cite{Hu2000}.

\subsection{Multiplier Score Bootstrap}

Let $W_1, W_2, \ldots, W_n$ be independent random variables that are	in turn independent of $(X_i, Y_i)$ and satisfy
\[
\mathbb{E}\left[W_i\right] = 0,\quad\mathbb{E}\left[W_i^2\right] = 1,\quad\mbox{and}\quad \mathbb{E}\left[|W_i|^3\right] < \infty.
\]
These variables need not be identically distributed but there is no	special reason for them to be non-identically distributed except for	allowing generality.  Recall that
\[
\sqrt{n}\,\hat{\Sigma}_n(\hat{\beta}_n - \beta_n) 
~=~ 
\frac{1}{\sqrt{n}}\sum_{i=1}^n X_i(Y_i - X_i^{\top}\beta_n).
\]
Define the estimated score vectors
\[
\hat{S}_i 
~=~ 
X_i(Y_i - X_i^{\top}\hat{\beta}_n) ,
\]
and observe that $\sum_{i=1}^n \hat{S}_i = 0$, which is	just the normal equations satisfied by $\hat{\beta}_n$.  Set
\begin{equation}\label{eq:DefTn-Bootstrap}
T_n ~:=~ \frac{1}{\sqrt{n}}\sum_{i=1}^n S_i\quad\mbox{and}\quad T_n^* ~:=~ \frac{1}{\sqrt{n}}\sum_{i=1}^n W_i\hat{S}_i,
\end{equation}
where $S_i$ are the true scores defined in \eqref{eq:score}.
Conditional on $\mathcal{Z}_n := \{(X_i, Y_i), 1\le i\le n\}$, $T_n^*$
is approximately normally distributed with mean zero and variance
$\check{K}_n$ and more precisely,
\begin{equation}\label{eq:OneBootstrapBE}
\sup_{A\in\mathcal{C}_p}\left|\mathbb{P}\left(T_n^*\in A\big|\mathcal{Z}_n\right) - \mathbb{P}\left(N(0, \check{K}_n)\in A\big|\mathcal{Z}_n\right)\right| 
~\le~
  C \, \frac{p^{1/4}}{n^{1/2}} \, 
  \frac{1}{n}\sum_{i=1}^n \norm{\check{K}_n^{-1/2}\hat{S}_i}_2^3 ~
  \mathbb{E}\left[|W_i|^3\right],
\end{equation}
Note that if $W_i\sim N(0, 1)$, then the distributional approximation error in~\eqref{eq:OneBootstrapBE} is exactly zero; this property makes the Gaussian choice for weights attractive in practice for finite sample performance. In this case, the multiplier bootstrap is called the Gaussian multiplier bootstrap in~\cite{Cher13}. 

As shown before $\norm{\check{K}_n - K_n^*}_{op} = o_p(1)$, and so, as
$n\to\infty$,
\begin{equation}\label{eq:ConsistencyKn}
\begin{split}
\sup_{A\in\mathcal{C}_p}\left|\mathbb{P}\left(N(0, \check{K}_n) \in A\big|\mathcal{Z}_n\right) - \mathbb{P}\left(N(0, K_n^*)\in A\right)\right| 
~&\le~ p^{1/2}\norm{\left(K_n^*\right)^{-1}\check{K}_n - I_p}_{op}^{1/2}\\
&=o_p(1).
\end{split}
\end{equation}
See Chapter 2, Example 2.3 of \cite{DasGupta08} for the inequality above. Recall the Berry-Esseen bound for linear regression from~\eqref{eq:BEBoundLinearRegUnified} as
\begin{equation}\label{eq:RegressionOriginalBE}
\sup_{A\in\mathcal{C}_p}\left|\mathbb{P}\left(\sqrt{n}\,\hat{\Sigma}_n(\hat{\beta}_n - \beta_n)\in A\right) - \mathbb{P}\left(N(0, K_n)\in A\right)\right| 
~\le~
C \, \frac{p^{1/4}}{n^{1/2}}\alpha_n.
\end{equation}
To show that the multiplier score bootstrap works, we need Anderson's	Lemma.
\begin{lem}[Corollary 3, \cite{Anderson55}]
  If $\xi\sim N(0, \Sigma)$ and $A$ is any centrally symmetric convex
  set (that is, $x\in A$ implies $-x\in A$ and $A$ convex), then 
\[
\mathbb{P}\left(\xi + y \in A\right) \le \mathbb{P}\left(\xi \in A\right)\quad\mbox{for all}\quad y.
\]
\end{lem}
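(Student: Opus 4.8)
The plan is to prove this in the classical form due to Anderson by reducing the Gaussian probability to a family of volume comparisons for centrally symmetric convex sets and then invoking the Brunn--Minkowski inequality. Write $f$ for the density of $\xi$, so that $\mathbb{P}(\xi + y \in A) = \int_A f(x-y)\,dx$; the goal is to show this quantity is maximized at $y = 0$ over centrally symmetric convex $A$. The two structural facts I would exploit are that the mean-zero Gaussian density is \emph{even}, $f(-u)=f(u)$, and that its superlevel sets $\{u : f(u) > s\} = \{u : u^\top\Sigma^{-1}u < c(s)\}$ are ellipsoids, hence convex and centrally symmetric.

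First I would peel the density into these superlevel sets. Using the layer-cake identity $f(u) = \int_0^\infty \mathbbm{1}\{f(u) > s\}\,ds$ together with Fubini's theorem, one obtains
\[
\int_A f(x-y)\,dx = \int_0^\infty \mbox{Vol}\big(A \cap (K_s + y)\big)\,ds, \qquad K_s := \{u : f(u) > s\}.
\]
Since each $K_s$ is convex and centrally symmetric, it suffices to prove the single geometric inequality: for any two centrally symmetric convex sets $A, K$ and any $y$,
\[
\mbox{Vol}\big(A \cap (K + y)\big) \le \mbox{Vol}\big(A \cap K\big),
\]
after which integrating over $s$ finishes the argument.

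To establish this core inequality, I would first note that applying the map $x \mapsto -x$ and using $A = -A$, $K = -K$ gives $\mbox{Vol}(A \cap (K+y)) = \mbox{Vol}(A \cap (K-y))$. The key step is the inclusion
\[
\tfrac{1}{2}\big(A \cap (K+y)\big) + \tfrac{1}{2}\big(A \cap (K-y)\big) \subseteq A \cap K,
\]
which holds because the midpoint of two points of $A$ lies in $A$ by convexity, while the midpoint of $k_1 + y \in K+y$ and $k_2 - y \in K-y$ equals $(k_1+k_2)/2 \in K$ (the shifts cancel and $K$ is convex). Applying Brunn--Minkowski to the left-hand Minkowski average, using $\mbox{Vol}(\tfrac12 S)=2^{-p}\mbox{Vol}(S)$ and the equality of the two summand volumes, yields $\mbox{Vol}(A\cap K)^{1/p} \ge \mbox{Vol}(A \cap (K+y))^{1/p}$, which is exactly what is needed.

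The main obstacle is handling degenerate cases cleanly rather than the inequality itself. If either $A \cap (K \pm y)$ has zero volume, the symmetry step forces both to vanish, so the bound is trivial; Brunn--Minkowski is only invoked when both intersections have positive volume. I would also remark that a singular $\Sigma$ is covered by running the identical argument with respect to Lebesgue measure on the affine support of $\xi$, where the density is again even with convex superlevel sets. Since the statement is precisely Corollary~3 of \cite{Anderson55}, one may alternatively cite it directly; the sketch above simply reconstructs that classical argument.
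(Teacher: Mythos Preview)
Your proposal is correct and in fact goes well beyond what the paper does: the paper provides no proof of this lemma at all, merely stating it as Corollary~3 of \cite{Anderson55} and citing that reference. Your sketch faithfully reconstructs Anderson's original argument---layer-cake decomposition of the Gaussian density into its (convex, symmetric) superlevel sets, the symmetry observation $\mbox{Vol}(A\cap(K+y))=\mbox{Vol}(A\cap(K-y))$, the midpoint inclusion $\tfrac12(A\cap(K+y))+\tfrac12(A\cap(K-y))\subseteq A\cap K$, and Brunn--Minkowski---so there is nothing to compare against on the paper's side; you have simply supplied the proof the paper elected to outsource.
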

\noindent By Anderson's Lemma, for any centrally convex set $A$,
\[
\mathbb{P}\left(N(0, K_n^*)\in A\right) \le \mathbb{P}\left(N(0, K_n) \in A\right),
\]
and using bounds \eqref{eq:OneBootstrapBE}, \eqref{eq:ConsistencyKn} and \eqref{eq:RegressionOriginalBE}, 
we get
\begin{equation} \label{eq:conservative-inequalities}
\begin{split}
\mathbb{P}\left(T_n \in A\right) = \mathbb{P}\left(\sqrt{n}\hat{\Sigma}_n(\hat{\beta}_n - \beta_n) \in A\right) 
&= \mathbb{P}\left(N(0, K_n)\in A\right) + o(1)\\
&\ge \mathbb{P}\left(N(0, K_n^*) \in A\right) + o(1)\\
&= \mathbb{P}\left(T_n^*\in A\big|\mathcal{Z}_n\right) + o_p(1),
\end{split}
\end{equation}
for all centrally symmetric convex sets in $\mathbb{R}^p$. Recall the definitions of $T_n$ and $T_n^*$ from~\eqref{eq:DefTn-Bootstrap}. For further use rewrite inequalities~\eqref{eq:conservative-inequalities} as
\begin{equation}\label{eq:conservative-ineq2}
\inf_{A\in\bar{\mathcal{C}}_p}\left(\int_A dP_{T_n} - \int_A dP_{T_n^*|\mathcal{Z}_n}\right) \ge o_p(1).
\end{equation}
Here $P_{T_n}$ and $P_{T_n^*|\mathcal{Z}_n}$ represent the	probability measure of $T_n$ and that of $T_n^*$ conditional on
$\mathcal{Z}_n$, respectively. The $o_p(1)$ on the right hand side is	with respect to the distribution of $\mathcal{Z}_n$.

These inequalities can be used for an asymptotic justification of the	simulation-based multiplier bootstrap: Suppose we generate $B_n$ draws
$(W_1^{*b},\ldots,W_n^{*b})$ ($b=1,\ldots,B_n$), calculate the associated	bootstrap statistics $T_n^{*b}$, and construct the bootstrap empirical	measure defined by
\[
\hat{\mu}_n(A) = \frac{1}{B_n}\sum_{b = 1}^{B_n}\,\mathbbm{1}\{T_n^{*b} \in A\},\quad\mbox{for any Borel set}\quad A\subseteq\mathbb{R}^p.
\]
The measure $\hat{\mu}_n(\cdot)$ is random due to randomness in
$\mathcal{Z}_n$ and in $(W_1^{*b},...,W_n^{*b})$.  Note that
$T_n^{*b}$ are iid random vectors conditional on $\mathcal{Z}_n$.
For any Borel set $A$ we have
\[
\mathbb{E}\left[\hat{\mu}_n(A)\big|\mathcal{Z}_n\right] = \mathbb{P}\left(T_n^*\in A\big|\mathcal{Z}_n\right).
\] 
Hence for various classes of sets
$\mathcal{C}^{\star}\subseteq\mathcal{C}_p$, conditional on
$\mathcal{Z}_n$, as $B_n\to\infty$, we have
\begin{equation}\label{eq:Bootstrap-convergence}
\sup_{A\in\mathcal{C}^{\star}}\left|\hat{\mu}_n(A) - \int_A dP_{T_n^*\big|\mathcal{Z}_n}\right| 
~=~ o_p(1),
\end{equation} 
where $o_p(1)$ on the right hand side is with respect to the	distribution of bootstrap samples. The class~$\mathcal{C}^{\star}$ of	sets that satisfy~\eqref{eq:Bootstrap-convergence} are called	Glivenko-Cantelli (GC) classes. The classes of all rectangles and	ellipsoids have been shown to be GC classes. See~\cite{Elker79},
\citet[Page 75]{Devroye82} and \citet[Chapter II]{Pollard84} for more	precise results.

Combining results~\eqref{eq:conservative-ineq2} and~\eqref{eq:Bootstrap-convergence}, we obtain
\begin{equation}\label{eq:Final-bootstrap}
\inf_{A\in\mathcal{C}^{\star}}\left(\int_A dP_{T_n} - \frac{1}{B_n}\sum_{b = 1}^{B_n}\,\mathbbm{1}\{T_n^*\in A\}\right) \ge o_p(1),
\end{equation}
where $o_p(1)$ refers to both the randomness of the data $\mathcal{Z}_n$ and the randomness of the bootstrap samples. Suppose now we construct a set $\hat{\mathcal{R}}_n(\alpha)\in\mathcal{C}^{\star}$ for $\alpha\in[0,1]$ such that 
\[
\frac{1}{B_n}\sum_{b = 1}^{B_n}\,\mathbbm{1}\{T_n^*\in \hat{\mathcal{R}}_n(\alpha)\} = 1 - \alpha.
\]
Then from inequality~\eqref{eq:Final-bootstrap}, it follows that as
$n\to\infty,$
\begin{equation}\label{eq:uniform-alpha}
\inf_{\alpha\in[0,1]}\left(\int_{\hat{\mathcal{R}}_n(\alpha)}dP_{T_n} - (1 - \alpha)\right) \ge o_p(1),
\end{equation}
where the $o_p(1)$ is exactly the one	from~\eqref{eq:Final-bootstrap}. Since $\hat{\mathcal{R}}_n(\alpha)$ is	random, the integral on the left hand side is a random	quantity. Recall that
$T_n = \sqrt{n}\,\hat{\Sigma}_n(\hat{\beta}_n - \beta_n)$ and so, the	above inequality implies that the confidence region
$\hat{\mathcal{R}}_n(\alpha)$ provides an asymptotically conservative	confidence region for $\beta_n$. Note here that $\alpha$ can be chosen	based on the data and validity still holds.  

It is clear from this analysis that the multiplier score bootstrap	ends up providing inference based on the same conservative variance	estimator as the direct sandwich estimator constructed before.  We	observe that the main decision was to apply the bootstrap at the level	of scores as opposed to the original data (and OLS applied to them).
The resampling bootstrap at the level of scores would allow a similar	analysis as given above for the multiplier bootstrap, and this	will be outlined in the following subsection.

\subsection{Resampling Score Bootstrap}

We consider briefly the $m$-of-$n$ resampling bootstrap applied to the	score vectors.  The associated resampling bootstrap statistic is
\[
T_m^{\star} ~:=~ \frac{1}{\sqrt{m}}\sum_{j=1}^m \hat{S}_{I_j},
\] 
where ${I_j, 1\le j\le m}$ represents an sample of $m$ iid uniform	random variables drawn from $\{1,2,\ldots,n\}$ (i.e., sampling with	replacement).  Applying the multidimensional Berry-Esseen bound	conditional on the data $\mathcal{Z}_n$, we obtain
\begin{equation} \label{eq:resampling-score}
\sup_{A\in\mathcal{C}_p}\left|\mathbb{P}\left(T_m^{\star}\in A\big|\mathcal{Z}_n\right) - \mathbb{P}\left(N(0, \check{K}_n)\in A\right)\right| 
~\le~
C \, \frac{p^{1/4}}{m^{1/2}} \, \frac{1}{n}\sum_{i=1}^n \norm{\check{K}_n^{-1/2}\hat{S}_i}_2^3 .
\end{equation}
Now, retracing the steps of the previous subsection, we conclude that	the resampling score bootstrap also produces asymptotically	conservative inference based on the same conservative variance	estimator as the sandwich.

Note that for fixed $p$ one requires a large resampling size $m$ for	the normal approximation to be good.  If $m$ does not grow as fast as
$n$, then the bound in \eqref{eq:resampling-score} dominates the error	in the coverage of the bootstrap confidence region.


\section{Hypothesis Testing in the Unified Framework}\label{sec:tAndFTests}

In the previous sections, we considered inference based on confidence	regions.  In this section we consider inference based on hypothesis	testing.
 Consider now	the test of the hypothesis
\[
H_0: \beta_n(j) = \beta_{n,0} 
\quad\mbox{versus}\quad 
H_1: \beta_n(j) \neq \beta_{n,0},
\]
for a fixed $j\in\{1,2,\ldots, p\}$ and some fixed
$\beta_{n,0} \in \mathbb{R}$.  If $\beta_{n,0} = 0$, then this is the problem	of establishing statistical significance of the (linear) effect as	measured by the coefficient $\beta_n(j)$ of the $j$-th covariate on	the response $Y$.  The only estimator for $\beta_n$ we considered was
$\hat{\beta}_n$, and so a reasonable test can be based on
$\hat{\beta}_n(j)$.  Recall that
\[
\hat{\Sigma}_n(\hat{\beta}_n - \beta_n) = \frac{1}{n}\sum_{i=1}^n X_i(Y_i - X_i^{\top}\beta_n),
\]
where the right hand side has mean zero with summands	possibly of non-zero mean.  Since $\hat{\Sigma}_n$ is a consistent	estimator for $\Sigma_n$ in the sense that
$\|\hat{\Sigma}_n - \Sigma_n\|_{op} = o_p(1)$ as $n\to\infty$,
\begin{equation}\label{eq:AsymLinearRepresentation}
\sqrt{n}(\hat{\beta}_n - \beta_n) = \frac{1}{\sqrt{n}}\sum_{i=1}^n \left[\Sigma_n^{-1}X_i\right](Y_i - X_i^{\top}\beta_n) + o_p(1).
\end{equation}
The right hand side, by the multivariate Berry-Esseen bound, has an	approximate normal distribution with mean zero and variance matrix
\[
\mbox{AV}_n := \Sigma_n^{-1}\left(\frac{1}{n}\sum_{i=1}^n \mbox{Var}\left(X_i(Y_i - X_i^{\top}\beta_n)\right)\right)\Sigma_n^{-1}.
\]
This asymptotic normal approximation implies that for any fixed
$1\le j\le p$,
\[
\frac{\sqrt{n}\left(\hat{\beta}(j) - \beta_n(j)\right)}{\sqrt{\mbox{AV}_n(j,j)}} 
~\overset{\mathcal{L}}{\to} ~
N(0,1).
\]
Here the notation $\overset{\mathcal{L}}{\to}$ is used to denote	convergence in law (or distribution).  As proved in previous sections,
there does not exist a consistent estimator for $\mbox{AV}_n$ (in this	general framework) but there exists a (asymptotically) conservative	estimator given by
\[
\widecheck{\mbox{AV}}_n 
~:=~
\hat{\Sigma}_n^{-1}\left(\frac{1}{n}\sum_{i=1}^n X_iX_i^{\top}(Y_i - X_i^{\top}\hat{\beta}_n)^2\right)\hat{\Sigma}_n^{-1}.
\]
This is consistent for
\[
{\mbox{AV}}^*_n 
~:=~
\Sigma_n^{-1}\left(\frac{1}{n}\sum_{i=1}^n \mathbb{E}\left[X_iX_i^{\top}(Y_i - X_i^{\top}\beta_n)^2\right]\right)\Sigma_n^{-1}.
\]
Thus, by Slutsky's theorem,
\[
\frac{\sqrt{n}(\hat{\beta}_n(j) - \beta_n(j))}{\sqrt{\widecheck{\mbox{AV}}_n(j,j)}} 
~\overset{\mathcal{L}}{\approx}~
N\left(0, \frac{\mbox{AV}_n(j,j)}{\mbox{AV}^*_n(j,j)}\right).
\]
Here the variance of the normal distribution on the right is at most	1.  Since this ratio cannot be estimated consistently, one solution is	to conservatively use $N(0,1)$ instead.  To perform the test replace
$\beta_n(j)$ by $\beta_{n,0}$ and use this normal distribution.  So, the	test is based on the statistic
\[
t_j ~:=~ 
\frac{\sqrt{n}(\hat{\beta}_n(j) - \beta_{n,0})}{\sqrt{\widecheck{\mbox{AV}}_n(j,j)}}.
\]
In the classical linear regression model, the denominator for the same	hypothesis testing problem is given by the classical estimator of the	variance obtained under the assumption of correct specification.  The	test statistic $t_j$ has then a $t$-distribution.  That denominator is	not valid in the unified framework which permits misspecification.
The present statistic $t_j$ hence cannot be assumed to have a
$t$-distribution.  Note that the test based on $t_j$ leads to a	conservative test, meaning the type-I error, in this general	framework, would be strictly smaller than $\alpha$ (asymptotically).
One subtle point here is that this conservativeness does not arise	from $\mbox{AV}^*_n$ but from the use of $N(0, 1)$ instead of the	correct but unattainable normal distribution.  Because
$t$-distributions have heavier tails than $N(0, 1)$, their use would	result in additional conservativeness.  Such could be considered	desirable by those who wish to account for estimated degrees of	freedom.

Suppose now we want to simultaneously test over all $1\le j\le p$
instead of just one of them, that is,
\[
H_0 : \beta_n = \beta_{n,0} \quad \mbox{versus} \quad H_1: \beta_n \neq \beta_{n,0}
\]
for some vector $\beta_{n,0} \in \mathbb{R}^p$.  This testing problem	is usually addressed by an $F$-test, but an intuitive alternative can	be based on the ``max-$|t|$'' statistic defined by
\[
T_{n,p} := \max_{1\le j\le p}|t_j| = \max_{1\le j\le p}\left|\frac{\sqrt{n}(\hat{\beta}_n(j) - \beta_0(j))}{\sqrt{\widecheck{\mbox{AV}}_n(j,j)}}\right|.
\]
The name ``max-$|t|$'' derives from classical linear regression	theory, but in the current context of a unified framework this is	strictly speaking a misnomer.  

We end this section with one last point: Even though all the above	tests are asymptotically conservative, they may not be conservative	for inference in finite samples because of asymptotic approximation	error.
\section{Deterministic Inequality for Linear Regression}\label{sec:DeteministicInequality}
In the previous sections, the observations are assumed to be independent and based on multivariate Berry-Esseen bounds we proved asymptotic normality of the estimator. In this section, we show, in a more direct way, that as long as a version of central limit theorem exists the linear regression estimator works. 

Recall from~\eqref{eq:SigmaGamma}, the definitions of $\hat{\Sigma}_n, \Sigma_n, \hat{\Gamma}_n$ and $\Gamma_n$. The least squares estimator $\hat{\beta}_n$ and $\beta_n$ are given by
\[
\hat{\beta}_n = \hat{\Sigma}_n^{-1}\hat{\Gamma}_n,\qquad\mbox{and}\qquad \beta_n = \Sigma_n^{-1}\Gamma_n.
\]
Note that these definitions do not require any structure on the dependence or the distributions of the random vectors. Define
\begin{equation}\label{eq:ErrorNorms}
\Lambda_n := \lambda_{\min}(\Sigma_n),\qquad\mbox{and}\qquad \mathcal{D}_{2n}^{\Sigma} := \|\hat{\Sigma}_n - \Sigma_n\|_{op}.
\end{equation}
Under this setting, the following deterministic inequality holds. This result is implicitly present in the calculations of previous sections and appeared in~\cite{Uniform:Kuch18} in a general context of post-selection inference and uniform-in-model results for OLS linear regression.
\begin{thm}\label{thm:DeterIneqOLS}
If $\mathcal{D}_{2n}^{\Sigma} \le \Lambda_n/2$, then the estimator $\hat{\beta}_n$ satisfies
\begin{equation}\label{eq:EstimationError}
\frac{1}{2}\|\Sigma_n^{-1}(\hat{\Gamma}_n - \hat{\Sigma}_n\beta_n)\|_2 ~~\le~~ \|\hat{\beta}_n - \beta_n\|_2 ~~\le~~ 2\|\Sigma_n^{-1}(\hat{\Gamma}_n - \hat{\Sigma}_n\beta_n)\|_2,
\end{equation}
and
\begin{equation}\label{eq:LinearApproxError}
\norm{\hat{\beta}_n - \beta_n - \Sigma_n^{-1}(\hat{\Gamma}_n - \hat{\Sigma}_n\beta_n)}_2 ~\le~ \frac{2\mathcal{D}_{2n}^{\Sigma}\|\Sigma_n^{-1}(\hat{\Gamma}_n - \hat{\Sigma}_n\beta_n)\|_2}{\Lambda_n}.
\end{equation}
\end{thm}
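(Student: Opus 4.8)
The plan is to reduce everything to a single algebraic identity coming from the normal equations, together with a self-bounding argument. First I would record the fundamental identity $\hat{\Sigma}_n(\hat{\beta}_n - \beta_n) = \hat{\Gamma}_n - \hat{\Sigma}_n\beta_n$, which is immediate from $\hat{\Sigma}_n\hat{\beta}_n = \hat{\Gamma}_n$ and follows exactly as in~\eqref{eq:BeforeLinearRepresentation}. Writing $E_n := \hat{\Sigma}_n - \Sigma_n$ (so that $\|E_n\|_{op} = \mathcal{D}_{2n}^{\Sigma}$), $\delta_n := \hat{\beta}_n - \beta_n$, and $u_n := \Sigma_n^{-1}(\hat{\Gamma}_n - \hat{\Sigma}_n\beta_n)$, this identity reads $\hat{\Sigma}_n\delta_n = \Sigma_n u_n$. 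Before proceeding I would note that, since $\hat{\Sigma}_n$ and $\Sigma_n$ are symmetric, the hypothesis $\mathcal{D}_{2n}^{\Sigma}\le\Lambda_n/2$ forces $\lambda_{\min}(\hat{\Sigma}_n)\ge\Lambda_n - \mathcal{D}_{2n}^{\Sigma}\ge\Lambda_n/2>0$, so $\hat{\Sigma}_n$ is invertible and $\hat{\beta}_n$ is well defined; here $\Lambda_n>0$ is implicit in the definition of $\beta_n$, and $\|\Sigma_n^{-1}\|_{op}=1/\Lambda_n$.

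Next I would substitute $\hat{\Sigma}_n = \Sigma_n + E_n$ into $\hat{\Sigma}_n\delta_n = \Sigma_n u_n$ and rearrange to obtain the crucial relation
\[
\delta_n - u_n = -\,\Sigma_n^{-1}E_n\delta_n.
\]
Taking Euclidean norms and using $\|\Sigma_n^{-1}\|_{op} = 1/\Lambda_n$ yields the self-bound $\|\delta_n - u_n\|_2 \le (\mathcal{D}_{2n}^{\Sigma}/\Lambda_n)\,\|\delta_n\|_2 \le \tfrac12\|\delta_n\|_2$, where the final inequality is precisely where the hypothesis is used.

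From here the two-sided estimate~\eqref{eq:EstimationError} is a pair of triangle inequalities applied to the self-bound. The bound $\|\delta_n - u_n\|_2\le\tfrac12\|\delta_n\|_2$ gives $\|u_n\|_2\ge\|\delta_n\|_2 - \tfrac12\|\delta_n\|_2 = \tfrac12\|\delta_n\|_2$, hence $\|\delta_n\|_2\le 2\|u_n\|_2$ (the upper bound), and likewise $\|u_n\|_2\le\tfrac32\|\delta_n\|_2$, hence $\|\delta_n\|_2\ge\tfrac23\|u_n\|_2\ge\tfrac12\|u_n\|_2$ (the lower bound). Finally, for the linearization error~\eqref{eq:LinearApproxError} I would feed the just-proved upper bound $\|\delta_n\|_2\le 2\|u_n\|_2$ back into the self-bound $\|\delta_n - u_n\|_2\le(\mathcal{D}_{2n}^{\Sigma}/\Lambda_n)\|\delta_n\|_2$, which directly produces $\|\delta_n - u_n\|_2\le 2\mathcal{D}_{2n}^{\Sigma}\|u_n\|_2/\Lambda_n$, as claimed.

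There is no serious analytic obstacle here: the whole argument is deterministic linear algebra. The one step requiring care is deriving the identity $\delta_n - u_n = -\Sigma_n^{-1}E_n\delta_n$ cleanly, rather than expanding $(\Sigma_n+E_n)^{-1}$ as a Neumann series (which would work but be messier and would only yield asymptotic constants). The payoff of this identity is that it is self-referential in $\delta_n$, so the contraction factor $\mathcal{D}_{2n}^{\Sigma}/\Lambda_n\le\tfrac12$ simultaneously delivers both the comparison~\eqref{eq:EstimationError} and the linearization bound~\eqref{eq:LinearApproxError} without ever needing to control $\hat{\Sigma}_n^{-1}$ directly.
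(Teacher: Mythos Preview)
Your proof is correct and follows essentially the same route as the paper: both derive the identity $\delta_n - u_n = -\Sigma_n^{-1}(\hat{\Sigma}_n-\Sigma_n)\delta_n$ from the normal equations, take norms to obtain the self-bound $\|\delta_n-u_n\|_2\le(\mathcal{D}_{2n}^{\Sigma}/\Lambda_n)\|\delta_n\|_2$, apply the triangle inequality for the two-sided estimate, and then feed the upper bound back into the self-bound for the linearization error. Your added remark that the hypothesis forces $\hat{\Sigma}_n$ to be invertible is a welcome detail the paper leaves implicit.
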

\begin{proof}
From the definition of $\hat{\beta}_n$, it follows that
\[
\hat{\Sigma}_n\hat{\beta}_n = \hat{\Gamma}_n.
\]
So subtracting $\hat{\Sigma}_n\beta_n$ from both sides, we get
\[
\hat{\Sigma}_n(\hat{\beta}_n - \beta_n) = \hat{\Gamma}_n - \hat{\Sigma}_n\beta_n.
\]
Now writing $\hat{\Sigma}_n = \Sigma_n - (\Sigma_n - \hat{\Sigma}_n)$ on the left hand side, we get
\[
\Sigma_n(\hat{\beta}_n - \beta_n) = \hat{\Gamma}_n - \hat{\Sigma}_n\beta_n + \left(\Sigma_n - \hat{\Sigma}_n\right)(\hat{\beta}_n - \beta_n).
\]
Therefore, by multiplying by $\Sigma_n^{-1}$ we obtain
\[
\hat{\beta}_n - \beta_n - \Sigma_n^{-1}(\hat{\Gamma}_n - \hat{\Sigma}_n\beta_n) = \Sigma_n^{-1}\left(\Sigma_n - \hat{\Sigma}_n\right)(\hat{\beta}_n - \beta_n).
\]
Taking $\norm{\cdot}_2$-norm on both sides implies that
\begin{equation}\label{eq:LinearError}
\begin{split}
\norm{\hat{\beta}_n - \beta_n - \Sigma_n^{-1}(\hat{\Gamma}_n - \hat{\Sigma}_n\beta_n)}_2 &\le 
\frac{\|\hat{\Sigma}_n - \Sigma_n\|_{op}}{\lambda_{\min}(\Sigma_n)}\|\hat{\beta}_n - \beta_n\|_2.
\end{split}
\end{equation}
Hence,
\[
\left(1 - \frac{\mathcal{D}_{2n}^{\Sigma}}{\Lambda_{n}}\right)\|\hat{\beta}_n - \beta_n\|_2 \le \norm{\Sigma_n^{-1}\left(\hat\Gamma_n - \hat\Sigma_n\beta_n\right)}_2 \le \left(1 + \frac{\mathcal{D}_{2n}^{\Sigma}}{\Lambda_{n}}\right)\|\hat{\beta}_n - \beta_n\|_2.
\]
Therefore, if $\mathcal{D}_{2n}^{\Sigma} \le \Lambda_n/2$, then
\[
\frac{1}{2}\norm{\Sigma_n^{-1}(\hat{\Gamma}_n - \hat{\Sigma}_n\beta_n)}_2 \le \|\hat{\beta}_n - \beta_n\|_2 \le 2\norm{\Sigma_n^{-1}(\hat{\Gamma}_n - \hat{\Sigma}_n\beta_n)}_2,
\]
which proves~\eqref{eq:EstimationError}. Substituting this inequality in~\eqref{eq:LinearError}, we get
\[
\norm{\hat{\beta}_n - \beta_n - \Sigma_n^{-1}(\hat{\Gamma}_n - \hat{\Sigma}_n\beta_n)}_2 \le \frac{2\mathcal{D}_{2n}^{\Sigma}\|\Sigma_n^{-1}(\hat{\Gamma}_n - \hat{\Sigma}_n\beta_n)\|_2}{\Lambda_n},
\]
which proves~\eqref{eq:LinearApproxError}.
\end{proof}
\begin{rem}
\emph{Theorem~\ref{thm:DeterIneqOLS} is a deterministic inequality. Inequality~\eqref{eq:EstimationError} implies a \emph{necessary and sufficient} condition for $\beta_n$ to be the target of estimation for $\hat{\beta}_n$. Note that $\Sigma_n^{-1}(\hat{\Gamma}_n - \hat{\Sigma}_n\beta_n)$ is an average of random vectors the expectation of which is zero by definition of $\beta_n$. As long as this average converges to zero (a version of LLN) in $\norm{\cdot}_2$-norm, $\beta_n$ is the target of estimation for $\hat{\beta}_n$. Additionally if this average (after proper scaling) converges in distribution (a version of CLT), then $\hat{\beta}_n - \beta_n$ (after the same scaling) converges to the same distribution. Since these are based on deterministic inequalities, a combination of LLN and CLT completes the upside down analysis of the $\hat{\beta}_n$.} 
\end{rem}
\section{Conclusions on Assumptions for Linear Regression}\label{sec:Conclude}


What we find from the (essentially finite-sample) analysis in previous	sections is that we do not need any of the usual model assumptions	including linearity, normality and homoscedasticity.  Only under	independence assumptions on observations (along with some moment	assumptions), we have asymptotic normality of the LSE around its	corresponding target (properly scaled), and
\[
\left(\frac{1}{n}\sum_{i=1}^n X_iX_i^{\top}\right)^{-1}\left(\frac{1}{n}\sum_{i=1}^n X_iX_i^{\top}\left(Y_i - X_i^{\top}\hat{\beta}_n\right)^2\right)\left(\frac{1}{n}\sum_{i=1}^n X_iX_i^{\top}\right)^{-1},
\]
is an asymptotically valid estimator of the asymptotic variance of
$\sqrt{n}(\hat\beta_n - \beta_n)$.  This should be understood in the	sense that when observations are identically distributed this	estimator is consistent, and when observations are non-identically	distributed this estimator is asymptotically conservative (no	consistent estimator exists in this case).  The conservativeness in	the broader context of generalized linear models was discussed in
\citet[page 492]{FAHR90}.

In passing let us now make a comment on the assumption of independence	of observations. When discussing and defining the target of	estimation, it was shown that even the independence of observations is	not needed. To make the rates and the asymptotic distribution	concrete, the assumption of independence was introduced. Recollecting	the technical tools that went into the derivation of Theorem
\ref{thm:AsymNorm}, it can be seen that the linear representation
\eqref{eq:BeforeLinearRepresentation} (that holds without any	assumptions on the random vectors) and the multivariate Berry-Esseen	bound (Theorem \ref{thm:MultiBEBoundBentkus}) for mean zero	independent random vectors are used. So, as long as a version of a	Berry-Esseen bound or a multivariate central limit theorem exists, the	assumption of independence can be replaced by a ``weak'' dependence	assumption.  See \cite{Siegfried09} for Berry-Esseen bounds for	averages of mean zero random vectors under various dependence settings	based on an approximation with $m$-dependent sequences.  Also, see	Chapter~10 of \cite{Potscher97}. 

\section*{Acknowledgments}
The authors thank Shaokun Li for the proof of Lemma~\ref{lem:VarConsis}.

\appendix
\section{Semiparametric Efficiency}\label{app:Efficiency}
The discussion in this article was restricted to the discussion of the ``asymptotic'' properties of the estimator that the data analyst started with and was not related to how well one can estimate the target of estimation. As mentioned before, the traditional mathematical statistics was designed under correctly specified parametric models and the goal is to efficiently estimate the true parameter that determines the distribution. From the point of view of previous sections, this question in the current form does not make sense; the analyst wants to use the (least squares linear regression) estimator he/she chose irrespective of what the true model is. Alternatively, one might ask ``having chosen an estimator that leads a particular target of estimation, is there an efficient way to estimate the target of estimation?''. For example, in case the analyst has chosen to use least squares linear regression estimator, the target of estimation becomes
\[
\beta_n = \argmin_{\theta\in\mathbb{R}^p} \frac{1}{n}\sum_{i=1}^n \mathbb{E}\left[(Y_i - X_i^{\top}\theta)^2\right].
\]
What is an efficient estimator of $\beta_n$? is $\hat{\beta}_n$ an efficient estimator for $\beta_n$? what does efficiency mean here? This question naturally leads to the area of semiparametric inference and the answer exists at least in the case of iid random vectors since \cite{Levit76}. See example 5 on page 725 of \cite{Levit76}. In this appendix, we provide a heuristic argument for how should an efficient estimator look like for the case of independent observations (without identical distributions assumption). See \citet[Lectures 1-4, pages 336--382]{vdV02} and \cite{McNeney98} for ways to formalizing the result. The setting for semiparametric inference is as follows: suppose $Z_1, Z_2, \ldots, Z_n$ are $n$ independent random vectors with $Z_i\sim P_i$ for some probability distribution $P_i$ and the target of estimation is $\psi(P^{\otimes n})$ for some functional $\psi$ defined on a class of distributions $\mathcal{P}_n$ (chosen also by the analyst). Here
\[
P^{\otimes n} = \bigotimes_{i=1}^n P_i,
\]
represents the joint distribution of $(Z_1, Z_2, \ldots, Z_n)$ and $\mathcal{P}_n$ contains distributions of this type where each $P_i$ varies over some set of probability distributions. Some example might clarify the problem:
\begin{enumerate}
\item Suppose $Z_1, \ldots, Z_n$ are $n$ independent real-valued random variables and we want to estimate
\[
\psi(P^{\otimes n}) := \frac{1}{n}\sum_{i=1}^n \mathbb{E}\left[Z_i\right] = \int \left(\frac{1}{n}\sum_{i=1}^n z_i\right)dP_1(z_1)\ldots dP_n(z_n).
\]
Here $\mathcal{P}_n$ can be taken to be the set of all joint distributions of $Z_1, \ldots, Z_n$ such that the marginal variances are all uniformly bounded. One can consider the same functional with random vectors too.
\item Suppose $Z_i\in\mathbb{R}^q, 1\le i\le n$ are independent random vectors and $\rho:\mathbb{R}^q\times\mathbb{R}^k$ is some ``loss'' function. The functional to be estimated is
\[
\psi(P^{\otimes n}) := \argmin_{\theta\in\mathbb{R}^k}\,\frac{1}{n}\sum_{i=1}^n \mathbb{E}\left[\rho(Z_i, \theta)\right].
\]
Here too the class of joint distributions $\mathcal{P}_n$ can be taken to be completely nonparametric as in the previous example except for some moment restrictions to let the functional well-defined. Note that unlike the previous example, it may not be possible to explicitly write the functional in terms of $P^{\otimes n}$.
\end{enumerate}
These are called semiparametric problems since the class of all distributions is mostly nonparametric (unrestricted) and the functional of interest is Euclidean (or parametric) in nature.

The basic idea of semiparametric efficiency is as stated by \citet[Section 2]{Newey90} and \citet[Section 1.2]{vdV02}:
\begin{quote}
The semiparametric problem is at least as hard as any of the parametric problems that it encompasses.
\end{quote}
To understand this idea, briefly consider the simpler case of identical distributions so that $\mathcal{P}_n$ is a subset of the class of all joint distributions with the restriction of identical marginal distributions. Let the true distribution of observations be 
\[
\bigotimes_{i=1}^n P.
\]
As a thought experiment, think of $\mathcal{P}_n$ as constituted by joint distributions of the form
\begin{equation}\label{eq:ParametricSubmodel}
P_t^{g,\otimes n} := \bigotimes_{i=1}^n P_{t}^{(g)},
\end{equation}
for $t\in\mathbb{R}$ and $g$ varying over some class of functions, $\mathcal{G}$ with $P_{t = 0}^{(g)} = P$ for any $g\in\mathcal{G}$. So, the nature can be thought of as picking a function $g\in\mathcal{G}$ and then producing observations from $P_t^{g, \otimes n}$. If the function $g$ is known to the statistician, he/she could perform maximum likelihood estimation on the parametric (sub-)model:
\[
\mathcal{P}_n^{(g)} := \left\{\bigotimes_{i=1}^n P_t^{(g)}:\,t\in\mathbb{R}\right\},
\]
to obtain $\hat{t}$, an estimator of $t$ and then estimate the functional $\psi$ by
\begin{equation}\label{eq:MLESubmodel}
\hat{\psi}_n^{(g)} := \psi\left(P_{\hat{t}}^{g,\otimes n}\right).
\end{equation}
Under certain regularity conditions, this estimator would achieve the ``smallest'' variance asymptotically, if $g$ were known to the statistician. However, $g$ and $\mathcal{G}$ are both unknown. Hence, the statistician cannot perform better than the largest variance of $\hat{\psi}_n^{(g)}$ over $g\in\mathcal{G}$. The parametric sub-model that leads to this largest variance is called the least favorable sub-model. To use this idea, one would usually take parametric sub-models of the form \eqref{eq:ParametricSubmodel} that are contained in $\mathcal{P}_n$ and take the largest efficient variance over $g\in\mathcal{G}$ as the best possible variance in the semiparametric setting.

To see this idea in action, note first that the variance of $\hat{\psi}_n^{(g)}$ (in \eqref{eq:MLESubmodel}) asymptotically should be given by the Cramer-Rao lower bound, under regularity conditions. We recall the Cramer-Rao lower bound here with proof for completeness.
\begin{lem}[Cramer-Rao Lower Bound]\label{lem:LowerBound}
If $h(X)$ is an unbiased estimator of $\psi(P)\in\mathbb{R}$ for $P\in\{P_{\theta}:\,\theta\in\Theta\}$ absolutely continuous with respect to the Lebesgue measure for some open subset $\Theta\subseteq\mathbb{R}^k$, then under conditions allowing interchange of derivative and integral 
\[
\mbox{Var}_{P_{\theta_0}}\left(h(X)\right) \ge \frac{\left[\psi'(P_{\theta_0})\right]^2}{\mathbb{E}\left[\dot{\ell}_{\theta_0}^2(X)\right]}.
\]
Here
\[
\psi'(P_{\theta_0}) = \frac{d\psi(P_{\theta})}{d\theta}\big|_{\theta = \theta_0}\quad\mbox{and}\quad \dot{\ell}_{\theta_0}(x) = \frac{d}{d\theta}\log dP_{\theta}(x)\big|_{\theta = \theta_0}.
\]
The function $\dot{\ell}_{\theta_0}(x)$ is called the likelihood score.
\end{lem}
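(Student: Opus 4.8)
The plan is to reduce the bound to the Cauchy--Schwarz (covariance) inequality applied to the estimator $h(X)$ and the likelihood score $\dot\ell_{\theta_0}(X)$. Throughout let $p_\theta := dP_\theta/dx$ denote the density, and let all expectations, variances and covariances be understood under $P_{\theta_0}$, so that I may write $\mathbb{E}$ and $\mbox{Var}$ without a subscript to match the statement.

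First I would record two consequences of differentiating under the integral sign, which is exactly the regularity the statement grants. Differentiating the normalization $\int p_\theta(x)\,dx = 1$ at $\theta=\theta_0$ and using $\dot p_{\theta_0} = p_{\theta_0}\dot\ell_{\theta_0}$ (valid wherever $p_{\theta_0}>0$, since $\dot\ell_{\theta_0} = \dot p_{\theta_0}/p_{\theta_0}$) gives $\mathbb{E}[\dot\ell_{\theta_0}(X)] = 0$; that is, the score is centered. Differentiating the unbiasedness identity $\int h(x) p_\theta(x)\,dx = \psi(P_\theta)$ at $\theta=\theta_0$ gives $\int h(x)\dot p_{\theta_0}(x)\,dx = \psi'(P_{\theta_0})$, which by the same substitution reads $\mathbb{E}[h(X)\dot\ell_{\theta_0}(X)] = \psi'(P_{\theta_0})$.

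I would then combine these two facts. Because the score is centered, $\mbox{Cov}(h(X),\dot\ell_{\theta_0}(X)) = \mathbb{E}[h(X)\dot\ell_{\theta_0}(X)] - \mathbb{E}[h(X)]\,\mathbb{E}[\dot\ell_{\theta_0}(X)] = \psi'(P_{\theta_0})$, while $\mbox{Var}(\dot\ell_{\theta_0}(X)) = \mathbb{E}[\dot\ell_{\theta_0}^2(X)]$. Applying the covariance form of Cauchy--Schwarz would give $[\psi'(P_{\theta_0})]^2 = [\mbox{Cov}(h,\dot\ell_{\theta_0})]^2 \le \mbox{Var}(h(X))\,\mathbb{E}[\dot\ell_{\theta_0}^2(X)]$, and dividing by the (positive) Fisher information $\mathbb{E}[\dot\ell_{\theta_0}^2(X)]$ would yield the claimed bound.

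Since the interchange of derivative and integral is assumed, there is no genuine analytic obstacle, and the computations are routine; the only points requiring care are the substitution $\dot p_{\theta_0} = p_{\theta_0}\dot\ell_{\theta_0}$ together with the tacit assumption that $\{p_{\theta_0}=0\}$ does not contribute, and the multivariate reading $\Theta\subseteq\mathbb{R}^k$ hinted at by the statement. In the case $k>1$, $\psi'$ is a gradient and $\dot\ell_{\theta_0}$ a vector score, so one projects and the scalar quotient is replaced by the quadratic form $\psi'(P_{\theta_0})^\top I(\theta_0)^{-1}\psi'(P_{\theta_0})$ with $I(\theta_0)$ the Fisher information matrix; the scalar bound stated here is precisely the instance needed for the one-dimensional least favorable submodel $t\in\mathbb{R}$ in which the lemma is ultimately applied.
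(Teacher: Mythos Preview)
Your proof is correct and follows essentially the same route as the paper's: differentiate the unbiasedness identity to obtain $\mathbb{E}[h(X)\dot\ell_{\theta_0}(X)] = \psi'(P_{\theta_0})$, recognize this as $\mbox{Cov}(h(X),\dot\ell_{\theta_0}(X))$, and apply Cauchy--Schwarz. You are slightly more explicit than the paper in first deriving $\mathbb{E}[\dot\ell_{\theta_0}(X)]=0$ from the normalization identity (the paper passes directly from the integral to the covariance), but this is only a minor elaboration of the same argument.
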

\begin{proof}
From the hypothesis of unbiasedness, we obtain
\[
\int h(x)dP_{\theta}(x) = \psi(P_{\theta})\quad\mbox{for all}\quad \theta\in\Theta.
\]
Now differentiating with respect to $\theta$, it follows that
\[
\int h(x)\dot{\ell}_{\theta_0}(x)dP_{\theta_0}(x) = \psi'(P_{\theta_0}).
\]
Equivalently, this can be written as
\[
\mbox{Cov}_{P_{\theta_0}}\left(h(X), \dot{\ell}_{\theta_0}(X)\right) = \psi'(P_{\theta_0}).
\]
Using Cauchy-Schwarz inequality, we get
\[
\mbox{Var}_{P_{\theta_0}}\left(h(X)\right) \ge \frac{\left[\psi'(P_{\theta_0})\right]^2}{\mathbb{E}\left[\dot{\ell}_{\theta_0}^2(X)\right]},
\]
proving the result.
\end{proof}
Now getting back to the semiparametric problem with independent but possibly non-identically distributed observations, consider the parametric sub-model,
\[
dP_t^{(g_1, \ldots, g_n)}(z_1, \ldots, z_n) := \prod_{i=1}^n c(t, g_i)K(tg_i(z_i))dP_i(z_i),\quad t\in\mathbb{R},
\]
where $g_i(\cdot), 1\le i\le n$ represent any set of $n$ functions satisfying $\int g_i(z)dP_i(z) = 0$ and $\int g_i^2(z)dP_i(z) < \infty.$ Here the function $K(\cdot)$ is given by
\[
K(u) = 2(1 + \exp(-2u))^{-1},
\]
and $c(t, g_i)$ is a positive normalizing constant. Since
\[
c(t, g_i)\int K(tg_i(z))dP_i(z) = 1\quad\mbox{for all}\quad t\in\mathbb{R},
\]
it follows that $c(0, g_i) = 1$. See example 1.12 on page 346 of \cite{vdV02}. Differentiating with respect to $t$ and taking $t = 0$ proves
\[
c'(0, g_i) = \frac{d}{dt}c(t, g_i)\big|_{t = 0} = 0.
\]
Therefore, the ``likelihood score'' term is given by 
\[
\frac{d}{dt}\log\frac{dP_t^{(g_1,\ldots, g_n)}(z_1, \ldots, z_n)}{dP^{\otimes n}(z_1, \ldots, z_n)}\bigg|_{t = 0} = \sum_{i=1}^n g_i(z_i),
\]
and
\[
\sum_{i=1}^n \int g_i(z_i)dP_i(z_i) = 0.
\]
By independence of observations,
\begin{equation}\label{eq:Information}
\mathbb{E}\left[\left(\sum_{i=1}^n g_i(Z_i)\right)^2\right] = \sum_{i=1}^n \mathbb{E}\left[g_i^2(Z_i)\right].
\end{equation}
To find the semiparametric lower bound, all we need to find is the ``derivative'' of the functional. For our purposes, all the functionals we work with are of the form given in example 2 above, that is
\[
\psi\left(P^{\otimes n}\right) := \argmin_{\theta\in\mathbb{R}^k}\,\frac{1}{n}\sum_{i=1}^n \mathbb{E}\left[\rho(Z_i, \theta)\right].
\]
We deal with the case $k = 1$ and the general case follows by taking linear combinations of the functional. Assume that $\rho(\cdot, \cdot)$ is twice differentiable with respect to the second argument and let
\[
\Psi(z, \theta) := \frac{d}{d\theta}\rho(z, \theta)\quad\mbox{and}\quad \dot{\Psi}(z, \theta) := \frac{d}{d\theta}\Psi(z, \theta).
\]
Using this differentiability, it follows that for all $P^{\otimes n}$,
\[
\sum_{i=1}^n \int \Psi\left(z_i, \psi\left(P^{\otimes n}\right)\right)dP_i(z_i) = 0.
\]
Taking $P^{\otimes n}$ to be $P_t^{(g_1, \ldots, g_n)}$, we get for all $t\in\mathbb{R}$,
\[
\sum_{i=1}^n c(t, g_i)\Psi\left(z_i, \psi\left(P_t^{(g_1, \ldots, g_n)}\right)\right)K(tg_i(z_i))dP_i(z_i) = 0.
\]
Differentiating with respect to $t$ and taking $t = 0$, it follows that
\begin{align}
\frac{d}{dt}\psi\left(P_t^{(g_1, \ldots, g_n)}\right) &= \left(\sum_{i=1}^n \mathbb{E}\left[\dot{\Psi}\left(Z_i, \psi\left(P^{\otimes n}\right)\right)\right]\right)^{-1}\mathbb{E}\left[\sum_{i=1}^n \Psi(Z_i, \psi\left(P^{\otimes n}\right))g_i(Z_i)\right]\nonumber\\ 
&= \mathbb{E}\left[\sum_{i=1}^n \tilde{\psi}_{P_i}(Z_i)g_i(Z_i)\right].\label{eq:InfluenceFunction}
\end{align}
Here 
\[
\tilde{\psi}_{P_i}(z_i) := \left(\sum_{i=1}^n \mathbb{E}\left[\dot{\Psi}\left(Z_i, \psi\left(P^{\otimes n}\right)\right)\right]\right)^{-1}\left\{\Psi(z_i, \psi\left(P^{\otimes n}\right)) - \mathbb{E}\left[\Psi(Z_i, \psi\left(P^{\otimes n}\right))\right]\right\},
\]
and the properties $c(0, g_i) = 1, c'(0, g_i) = 0, K(0) = 1, K'(0) = 1$ are used. The function $\tilde{\psi}_{P_i}(\cdot)$ is called the ``efficient influence function'' for the iid case. Substituting \eqref{eq:Information} and \eqref{eq:InfluenceFunction} in the Cramer-Rao lower bound (Lemma \ref{lem:LowerBound}) and maximizing with respect to all $g_i, 1\le i\le n$ with finite second moment implies
\[
\sup_{\substack{(g_i)_{1\le i\le n}:\\\mathbb{E}(g_i(Z_i)) = 0, \mathbb{E}(g_i^2(Z_i)) < \infty}} \left(\sum_{i=1}^n \mathbb{E}\left[g_i^2(Z_i)\right]\right)^{-1}\left(\sum_{i=1}^n \mathbb{E}\left[\tilde{\psi}_{P_i}(Z_i)g_i(Z_i)\right]\right)^2 = \mathbb{E}\left[\sum_{i=1}^n \tilde{\psi}_{P_i}^2(Z_i)\right].
\]
This maximum is attained for $g_i(z_i) = \tilde{\psi}_{P_i}(z_i)$. By a semiparametric extension of regular estimator, this implies that any regular efficient estimator $T_n$ must have an asymptotic linear representation given by
\[
\sqrt{n}\left(T_n - \psi\left(P^{\otimes n}\right)\right) = {\sqrt{n}}\sum_{i=1}^n \tilde{\psi}_{P_i}(Z_i) + o_p(1).
\]
Note that $\mathbb{E}[\tilde{\psi}_{P_i}(Z_i)] = 0$ and $\mbox{Var}(\tilde{\psi}_{P_i}(Z_i)) < \infty$. By an application of Lindeberg-Feller theorem, it follows that $T_n$ under suitable normalization has an asymptotic normal distribution under the Lindeberg condition. 

\subsection{Application to Linear Regression}
For the case of linear regression, $Z_i = (X_i, Y_i)$ and $\rho(z, \theta) = (y - x^{\top}\theta)^2$. Therefore,
\[
\Psi(z, \theta) = -x(y - x^{\top}\theta)\quad\mbox{and}\quad \dot{\Psi}(z, \theta) = -xx^{\top}.
\]
Hence, any efficient regular estimator $T_n$ of the target of estimator $\beta_n$ must have an asymptotic linear representation given by
\begin{align*}
\sqrt{n}\left(T_n - \beta_n\right) &= {\sqrt{n}}\sum_{i=1}^n \left(\sum_{i=1}^n \mathbb{E}\left[X_iX_i^{\top}\right]\right)^{-1}\left\{X_i(Y_i - X_i^{\top}\beta_n) - \mathbb{E}\left[X_i(Y_i - X_i^{\top}\beta_n)\right]\right\}\\ &\qquad+ o_p(1)\\
&= \frac{1}{\sqrt{n}}\sum_{i=1}^n \Sigma_n^{-1}\left\{X_i(Y_i - X_i^{\top}\beta_n) - \mathbb{E}\left[X_i(Y_i - X_i^{\top}\beta_n)\right]\right\} + o_p(1)\\
&= \frac{1}{\sqrt{n}}\sum_{i=1}^n \Sigma_n^{-1}X_i(Y_i - X_i^{\top}\beta_n) + o_p(1).
\end{align*}
Realize from Equation \eqref{eq:AsymLinearRepresentation} that the least squares linear regression estimator $\hat{\beta}_n$ satisfies the linear representation and so the least squares estimator is a semiparametrically efficient estimator of $\beta_n$. Similar calculations holds for $M$-estimators obtained from generalized linear models.
\bibliographystyle{apalike}
\bibliography{AssumpLean}

\begin{thebibliography}{}

\bibitem[Abadie et~al., 2014]{Alberto14}
Abadie, A., Imbens, G.~W., and Zheng, F. (2014).
\newblock Inference for misspecified models with fixed regressors.
\newblock {\em J. Amer. Statist. Assoc.}, 109(508):1601--1614.

\bibitem[Anderson, 1955]{Anderson55}
Anderson, T.~W. (1955).
\newblock The integral of a symmetric unimodal function over a symmetric convex
  set and some probability inequalities.
\newblock {\em Proc. Amer. Math. Soc.}, 6:170--176.

\bibitem[{Bachoc} et~al., 2016]{Bac16}
{Bachoc}, F., {Preinerstorfer}, D., and {Steinberger}, L. (2016).
\newblock {Uniformly valid confidence intervals post-model-selection}.
\newblock {\em ArXiv e-prints}.

\bibitem[Bentkus, 2004]{Bent04}
Bentkus, V. (2004).
\newblock A {L}yapunov type bound in {${\bf R}^d$}.
\newblock {\em Teor. Veroyatn. Primen.}, 49(2):400--410.

\bibitem[Bolthausen et~al., 2002]{vdV02}
Bolthausen, E., Perkins, E., and van~der Vaart, A. (2002).
\newblock {\em Lectures on probability theory and statistics}, volume 1781 of
  {\em Lecture Notes in Mathematics}.
\newblock Springer-Verlag, Berlin.
\newblock Lectures from the 29th Summer School on Probability Theory held in
  Saint-Flour, July 8--24, 1999, Edited by Pierre Bernard.

\bibitem[{Buja} et~al., 2014]{Buja14}
{Buja}, A., {Berk}, R., {Brown}, L., {George}, E., {Pitkin}, E., {Traskin}, M.,
  {Zhan}, K., and {Zhao}, L. (2014).
\newblock {Models as Approximations, Part I: A Conspiracy of Nonlinearity and
  Random Regressors in Linear Regression}.
\newblock {\em ArXiv e-prints}.

\bibitem[Chernozhukov et~al., 2013]{Cher13}
Chernozhukov, V., Chetverikov, D., and Kato, K. (2013).
\newblock Gaussian approximations and multiplier bootstrap for maxima of sums
  of high-dimensional random vectors.
\newblock {\em Ann. Statist.}, 41(6):2786--2819.

\bibitem[DasGupta, 2008]{DasGupta08}
DasGupta, A. (2008).
\newblock {\em Asymptotic theory of statistics and probability}.
\newblock Springer Texts in Statistics. Springer, New York.

\bibitem[Devroye, 1982]{Devroye82}
Devroye, L. (1982).
\newblock Bounds for the uniform deviation of empirical measures.
\newblock {\em J. Multivariate Anal.}, 12(1):72--79.

\bibitem[Durrett, 2010]{Durrett10}
Durrett, R. (2010).
\newblock {\em Probability: theory and examples}, volume~31 of {\em Cambridge
  Series in Statistical and Probabilistic Mathematics}.
\newblock Cambridge University Press, Cambridge, fourth edition.

\bibitem[Elker et~al., 1979]{Elker79}
Elker, J., Pollard, D., and Stute, W. (1979).
\newblock Glivenko-{C}antelli theorems for classes of convex sets.
\newblock {\em Adv. in Appl. Probab.}, 11(4):820--833.

\bibitem[Fahrmeir, 1990]{FAHR90}
Fahrmeir, L. (1990).
\newblock Maximum likelihood estimation in misspecified generalized linear
  models.
\newblock {\em Statistics}, 21(4):487--502.

\bibitem[Freedman, 1981]{Freedman81}
Freedman, D.~A. (1981).
\newblock Bootstrapping regression models.
\newblock {\em Ann. Statist.}, 9(6):1218--1228.

\bibitem[Gallant and White, 1988]{Gallant1988}
Gallant, A. and White, H. (1988).
\newblock {\em A Unified Theory of Estimation and Inference for Nonlinear
  Dynamic Models}.
\newblock B. Blackwell.

\bibitem[H\"ormann, 2009]{Siegfried09}
H\"ormann, S. (2009).
\newblock Berry-{E}sseen bounds for econometric time series.
\newblock {\em ALEA Lat. Am. J. Probab. Math. Stat.}, 6:377--397.

\bibitem[Hu and Kalbfleisch, 2000]{Hu2000}
Hu, F. and Kalbfleisch, J.~D. (2000).
\newblock The estimating function bootstrap.
\newblock {\em Canad. J. Statist.}, 28(3):449--499.
\newblock With discussion and rejoinder by the authors.

\bibitem[{Kuchibhotla} et~al., 2018]{Uniform:Kuch18}
{Kuchibhotla}, A.~K., {Brown}, L.~D., {Buja}, A., {George}, E.~I., and {Zhao},
  L. (2018).
\newblock {A Model Free Perspective for Linear Regression: Uniform-in-model
  Bounds for Post Selection Inference}.
\newblock {\em ArXiv e-prints}.

\bibitem[Levit, 1976]{Levit76}
Levit, B.~Y. (1976).
\newblock On the efficiency of a class of non-parametric estimates.
\newblock {\em Theory of Probability \& Its Applications}, 20(4):723--740.

\bibitem[Liu and Singh, 1995]{Liu95}
Liu, R.~Y. and Singh, K. (1995).
\newblock Using i.i.d.\ bootstrap inference for general non-i.i.d.\ models.
\newblock {\em J. Statist. Plann. Inference}, 43(1-2):67--75.

\bibitem[Long and Ervin, 2000]{Long00}
Long, J.~S. and Ervin, L.~H. (2000).
\newblock Using heteroscedasticity consistent standard errors in the linear
  regression model.
\newblock {\em The American Statistician}, 54(3):217--224.

\bibitem[McNeney, 1998]{McNeney98}
McNeney, W.~B. (1998).
\newblock {\em Asymptotic efficiency in semiparametric models with non-i.i.d.
  data}.
\newblock ProQuest LLC, Ann Arbor, MI.
\newblock Thesis (Ph.D.)--University of Washington.

\bibitem[Newey, 1990]{Newey90}
Newey, W. (1990).
\newblock Semiparametric efficiency bounds.
\newblock {\em Journal of Applied Econometrics}, 5(2):99--135.

\bibitem[Pollard, 1984]{Pollard84}
Pollard, D. (1984).
\newblock {\em Convergence of stochastic processes}.
\newblock Springer Series in Statistics. Springer-Verlag, New York.

\bibitem[P\"otscher and Prucha, 1997]{Potscher97}
P\"otscher, B.~M. and Prucha, I.~R. (1997).
\newblock {\em Dynamic nonlinear econometric models}.
\newblock Springer-Verlag, Berlin.
\newblock Asymptotic theory.

\bibitem[Shao, 2000]{Shao00}
Shao, Q.-M. (2000).
\newblock A comparison theorem on moment inequalities between negatively
  associated and independent random variables.
\newblock {\em J. Theoret. Probab.}, 13(2):343--356.

\bibitem[van~der Vaart and Wellner, 1996]{VdvW96}
van~der Vaart, A.~W. and Wellner, J.~A. (1996).
\newblock {\em Weak convergence and empirical processes}.
\newblock Springer Series in Statistics. Springer-Verlag, New York.
\newblock With applications to statistics.

\bibitem[Vershynin, 2012]{Ver12}
Vershynin, R. (2012).
\newblock How close is the sample covariance matrix to the actual covariance
  matrix?
\newblock {\em J. Theoret. Probab.}, 25(3):655--686.

\bibitem[White, 1980]{White1980}
White, H. (1980).
\newblock A heteroskedasticity-consistent covariance matrix estimator and a
  direct test for heteroskedasticity.
\newblock {\em Econometrica}, 48(4):817--838.

\bibitem[White, 2001]{White2001}
White, H. (2001).
\newblock {\em Asymptotic Theory for Econometricians}.
\newblock Economic theory, econometrics, and mathematical economics. Academic
  Press.

\bibitem[Yuan and Jennrich, 1998]{Yuan98}
Yuan, K.-H. and Jennrich, R.~I. (1998).
\newblock Asymptotics of estimating equations under natural conditions.
\newblock {\em J. Multivariate Anal.}, 65(2):245--260.

\end{thebibliography}
\end{document}